\newtheorem{theorem}{Theorem}
\newtheorem{lemma}{Lemma}
\theoremstyle{definition}
\def\beq{ \begin{equation} }
\def\eeq{ \end{equation} }
\def\mn{\medskip\noindent}
\def\square{\vcenter{\vbox{\hrule height .4pt
  \hbox{\vrule width .4pt height 5pt \kern 5pt
        \vrule width .4pt} \hrule height .4pt}}}
\def\ep{\epsilon}
\def\ER{Erd\"os-R\'enyi }
\def\CC{{\cal C}}
\def\clearp{}
\begin{document}

\title{Critical behavior of two-choice rules, \\
a class of Achlioptas processes}
	\author{Braden Hoagland and Rick Durrett\thanks{A version of this paper was Braden's senior thesis at Duke. He now works at IMC Trading in Chicago. RD was partially spported by NSF grants DMS 1809967 and 2153429 from the probability program} \\
\small Dept of Math, P.O. Box 90320,
Duke University, Durham NC 27708}

\maketitle

\begin{abstract}
Achlioptas processes are a class of dynamically grown random graphs where on each step several edges are chosen at random but only one is added. The sum rule, product rule, and bounded size rules have been extensively studied. Here we introduce a new collection of rules called two-choice rules. In these systems one first pick $m$ vertices at random from the graph and chooses a vertex $v$ according to some rule based on their cluster sizes. The procedure is then repeated with a second independent sample to pick a vertex $v'$ and we add an edge from $v$ and $v'$. These systems are tractable because the cluster size distribution satisfies an analog of the Smoluchowski equation. We study the critical exponents associated with the phase transitions in five of these models. In contrast to the situation for $d$-dimensional percolation we show that all of the critical exponents can be computed if we know $\beta$, the exponent associated with the size of the giant component. When $\beta=1$ all the critical exponents are the same as for the \ER graph.
\end{abstract}

\clearp

\section{Summary of results} 

We study the phase transitions of a class of dynamically grown graphs that we call {\bf two-choice rules}. We start with a graph with $n$ vertices and no edges. In general, on each step we first pick $m$ vertices $v_1, \ldots, v_m$ at random and choose one, $v$, according to some rule. We then pick a second set of $m$ vertices  $v'_1, \ldots, v'_m$, choose one of them, $v'$, according to the same rule, and then add $(v,v')$ to the graph. 

The \ER graph is the degenerate $m=1$ case of our system: on each step we pick an edge at random and add it to the graph.  To be able to  take the limit as $n\to\infty$ we say that graph with $m$ edges occurs at time $t=2m/n$, so that in the limit as $n\to\infty$ the mean degree of vertices at time $t$ is $t$. In the case of \ER this means the threshold for the existence of a giant component is $t_c=1$. In many papers the graph with $m$ edges occurs at time $t=m/n$ so in the case of \ER this means the threshold for the existence of a giant component is $t_c=1$. 

If we let $\kappa_i$ be the size of the cluster containing $v_i$ then in three of  the rules we consider we  choose a vertex (i) with the smallest $\kappa_i$, (ii) with the largest $\kappa_i$, or (iii)with the median $\kappa_i$ (assuming $m$ is odd). In our fourth system called the Bohman-Frieze vertex rule, we choose an isolated vertex (i.e., $\kappa_i=1$) if there is one in the set, otherwise choose at random from the set. In our fifth example, the {\bf alternative edge rule} \cite{DS-M}, a vertex $v$ is chosen at random and then a secon vertex $v'$ is chosen accroding to the min rule. 

As explained in Section \ref{sec:Achpr} these systems are instances of Achlioptas processes. We now introduce two famous examples. In the {\bf sum rule} one chooses four vertices $v_1,v_2,v_3,v_4$ and connect $v_1,v_2$ if and only if $\kappa_1+\kappa_2 \le \kappa_3+\kappa_4$. In the {\bf product rule} we connect $v_1,v_2$ if and only if $\kappa_1\cdot\kappa_2 \le \kappa_3\cdot\kappa_4$. Achlioptas, D'Souza and Spencer \cite{ADSS} claimed based on simulation that the sum and the product rule had discontinuous phase transitions, but Riordan and Warnkje \cite{RWcont} proved mathematically that the transition was continuous for a very general class of rules. Their proof involved two arguments by contradiction, and hence did not yield much quantitative information about the phase transitions. Here we will study the standard percolation critical exponents for the phase transition of two-choice rules.

\subsection{Critical exponents}

Let $p(s,t)$ be the probability that a randomly chosen vertex belongs to a cluster of size $s$ at time $t$ (in the limit $n\to\infty$). Let $\theta(t)= 1 -\sum_s p(s,t)$, and $\chi_k(t) = \sum_s s^k p(s,t)$ (the value $s=\infty$ is excluded from each of the sums). The exponents $\beta$, $\gamma$, and $\Delta$ are defined by
\begin{align*}
\theta(t) &\approx (t-t_c)^\beta \qquad\hbox{as $t\downarrow t_c$} \\
\chi_1(t) & \approx| t-t_c|^{-\gamma}  \qquad\hbox{as $t\to t_c$} \\
\hbox{for $k\ge 2$}\quad 
\chi_k(t)/\chi_{k-1}(t)& \approx |t-t_c|^{-\Delta}  \qquad\hbox{as $t\to t_c$}
\end{align*}
In the physics literature the meaning of $\approx$ is not precisely defined.
It could be something as weak as 
$$
\frac{\log \theta(t)}{\log(t-t_c)} \to \beta  \qquad \hbox{as $t \downarrow t_c$}
$$
In order to derive relations between exponents we will suppose 
$\theta(t)  \sim C (t-t_c)^\beta$ where $a(t) \sim b(t)$ means $a(t)/b(t) \to 1$.
Our final two exponents concern the behavior near the critical value
\beq
p(s,t) \approx s^{1-\tau} f(s|t-t_c|^{1/\sigma})
\label{nearcr}
\eeq
where $f$ is a {\bf scaling function}.

In Section \ref{sec:ER} we will compute the critical exponents for the \ER model, 
\beq
\beta=1, \quad \gamma=1, \quad \Delta=2, \quad \tau=5/2, \quad \sigma=1/2
\label{ERexp}
\eeq
and show that the scaling relationship \eqref{nearcr} holds. These results are well-known. We include them for completeness.

The most significant difference between our critical exponents and those for $d$-dimensional percolation is that we do not have a {\bf correlation length} $\xi(p)$ that gives the spatial size of  a typical finite cluster. Thi squantity is often defined in terms of the exponential decay of  probability 0 and $x$ are in the same finite cluster
$$
\tau^f(0,x) = P_p( 0 \leftrightarrow x, |\CC_0|< \infty).
$$ 
For example, if $e_1$ is the first unit vector 
$$
\xi(p) = \lim_{n\to\infty} - \frac{1}{n} \log \tau^f(0,ne_1)
$$
A simpler approach taken by Kesten \cite{HKSR} is to define the correlation length by
$$
\xi(p) =\left(  \frac{1}{\chi_1(p)} \sum_y |y|^2 P( 0 \to y; |{\cal C}_0| < \infty) \right)^{1/2}
$$
and the critical exponent $\nu$ by $\xi(p) \approx |p-p_c|^{-\nu}$. Finally on a $d$-dimensional graph we have another exponent $\eta$ called the {\bf anomalous dimension}
$$
P_{cr}( 0 \to x ) \approx |x|^{2-d-\eta}
$$

The percolation critical exponents satisfy 
the following scaling relationships
\beq
\beta = \frac{2\nu}{\delta+1} \qquad
\gamma = 2\nu \frac{\delta-1}{\delta+1} \qquad
\Delta = 2\nu \frac{\delta}{\delta+1} \qquad
\eta = \frac{4}{\delta}
\label{percSR}
\eeq
Kesten \cite{HKSR} established 
provided that the exponents $\delta$ and $\nu$ exist.

\subsection{Models and ODEs}

In the \ER model $p(s,t)$ satisfies
\beq
\frac{\partial  p(s,t)}{\partial t} =  s \sum_{u+v=s}  p(u,t) p(v,t) -2 s p(s,t) 
\label{ERode}
\eeq
In words if the new edge joins a cluster of size $u$ and $v$ with $u+v=s$ we now have $s$ new vertices that are part of clusters of size $s$, while if a cluster of size $s$ is one of the two that merge then the $s$ vertices in that cluster are no longer part of a cluster of size $s$.
If we let $n(s,t) = p(s,t)/s$ be the number of clusters of size s and $K(u,v)=uv$ then \eqref{ERode} becomes the {\bf Smoluchowski equation}
\beq
\frac{\partial  n(s,t)}{\partial t} =  \sum_{u+v=s}  K(u,v) n(u,t) n(v,t)
-2 n(s,t) 
\label{Smoleq}
\eeq
For probabilists the best known reference is Aldous \cite{AldousMFC}, but the survey by
Leyvraz \cite{Leyvraz} is more extensive. In addition to results about existence, uniqueness, and asymptotic behavior, there are exact results for $K(u,v)=1$ (Kingman's coalescent) and $K(u,v)=u+v$ (the additive case).

\mn
{\bf Example 1. Min(m) rule}. da Costa et al, \cite{daCosta-cont,daCosta-critexp,daCosta-scaling}, considered a system in which each choice is made as follows: pick $m$ vertices $v_1 \ldots v_{m}$ with cluster sizes $\kappa_1, \ldots \kappa_{m}$. Let $\kappa_*= \min\{\kappa_1,\ldots \kappa_m\}$ and pick a vertex at random from those with cluster size $\kappa_*$. To write the differential equation for this and other two-choice rules, let $\phi(s,t)$ be the probability of choosing a vertex with cluster size $s$ at time $t$.
\beq
\frac{\partial  p(s,t)}{\partial t} =  s \sum_{u+v=s}  \phi(u,t) \phi(v,t)
-2 s \phi(s,t) 
\label{2cODE}
\eeq
In the case of the min rule if we let $\hat P(u,t) = P(\kappa > u)$ be the tail of the distribution, we have 
\beq
\phi(u,t)= P( \min\{\kappa_1,\ldots \kappa_{m}\} = u) 
= \hat P(u-1,t)^m - \hat P(u,t)^m
\label{dCphi}
\eeq
When $u$ is large it is unlikely that two vertices achieve the minimum so
\beq
\phi(u,t) \sim m p(u,t) \hat P(u,t)^{m-1}
\label{minasy}
\eeq

\mn
{\bf Example 2. Median(2m-1) rule}. Pick $2m-1$ vertices $v_1 \ldots v_{2m-1}$ with cluster sizes $\kappa_i$. Let $\kappa_{med}$ be the median value. Since $2m-1$ is odd the value of the median is unique but it may be achieved by several $\kappa_i$. In this case we pick one of the vertices with the median cluster size at random.
Since the median may occur many times in the sample, it is hard to write an exact formula for $\phi(u,t)$. However if we let $P(u,t) = P( \kappa \le u)$ then for large $u$
\begin{align}
\phi(u,t) \approx c_m P(u-1,t)^{m-1} p(u,t) \hat P(u,t)^{m-1} 
\nonumber\\
\sim P(u,t) \hat P(u,t)^{m-1}
\label{medasy}
\end{align}
where $c_m = (2m-1)!/(m-1)!(m-1)!$.
Thus despite picking from the center of the distribution the asymptotic behavior of $\phi$ will be the same as the Min(m) rule. One can obviously generalize this example to other order statistics, which will then have the same behavior as an apropriate Min rule.

\mn
{\bf Example 3. Max(m) rule}.
If picking the vertices of minimum degree slows percolation then it is natural to guess that picking vertices of maximum degree speeds it up, and ask how this affects the phase transtion. As far as we can tell, this version has not been considered. We pick $m$ vertices $v_1 \ldots v_{m}$ with cluster sizes $\kappa_1, \ldots \kappa_{m}$. Let $\kappa^*= \max\{\kappa_1,\ldots \kappa_m\}$ and pick a vertex at random from those with cluster size $\kappa^*$. 
Using $P(u,t) = P(\kappa \le u)$, we have 
\beq
\phi(u,t)= P( \max\{\kappa_1,\ldots \kappa_{m}\} = u) 
= P(u,t)^m -P(u-1,t)^m
\label{maxphi}
\eeq
As $u \to\infty$ we have
\beq 
\phi(u,t) \sim m p(u,t) \quad\hbox{as $u\to\infty$.}
\label{maxasy}
\eeq
Since $\phi \sim c p$ it is natural (but somewhat naive) to guess based on the similaritiy of \eqref{2cODE} to \eqref{ERode} that the exponents will be the same as for \ER. The next system also has $\phi \sim c p$ and we know that it has the same behavior a \ER

\mn
{\bf Example 4. BF vertex rule}
Here BF stands for Bohman-Frieze \cite{BohFri}. For a discussion of their work see Section \ref{sec:Achpr}. In our version which we call the BF vertex rule,  we pick $m+1$ vertices $v_1 \ldots v_{m+1}$. If any of the first $m$ vertices is isolated ($\kappa_i=1$) then we pick one at random (or pick the first on the list). If not we pick $v_{m+1}$. Though this is not the same as their model, it is a bounded size rule,  so it has the same critical exponents as \ER, see Riordan and Warnke \cite{RW-bs}. The BF vertex rule has choice distribution
\begin{align}
& \phi(1,t) = [1 - (1-p(1,t))^m] +  (1-p(1,t))^m p(1,t) = 1 - (1-p(1,t))^{m+1}
\nonumber\\
& \phi(i,t) = (1-p(1,t))^m p(i,t) \quad \hbox{for $i > 1$}
\label{BFchoice}
\end{align}
In words, we will choose a vertex in  a cluster of size 1, if at least one of the first $m$ vertices is isolated, or if all of them are not isolated and $v_{m+1}$ is, we end up with an isolated vertex. If none of the first $m$ vertices are isolated and $v_{m+1}$ has a cluster of size $i>1$ then the size of the chosen cluster is $i$. 

\subsection{Results}

It has long been known for percolation (see Stauffer \cite{StauST}) that \eqref{nearcr} implies
\begin{align}
\beta & = (\tau-2)/\sigma 
\label{scbeta}\\
\gamma & = (3-\tau)/\sigma 
\label{scgamma}\\
\Delta & = 1/\sigma
\label{scDelta}
\end{align}
In Section \ref{sec:scth} we give the proofs, and state the assumptions on $\tau$ and the scaling function $f$ in \eqref{nearcr} that are needed for these results to be theorems.

Let $\beta_\phi$, $\gamma_\phi$, $\delta_\phi$, and $\tau_\phi$ be the values of the critical exponents when $p(s,t)$ is replaced by $\phi(s,t)$. Since $\phi$ is computed from $p$, we assume that $\phi$ has the scaling property
\beq
\phi(s,t) \approx s^\alpha g(s|t-t_c|^{1/\sigma}
\label{phinearcr}
\eeq
Here $\alpha = 1 - \tau_\phi$ and $\sigma_\phi = \sigma$. As we explain in Section \ref{sec:scth} \eqref{scbeta}--\eqref{scDelta} hold for the exponents with subscript $\phi$, Using \eqref{minasy}, \eqref{medasy}, \eqref{maxasy}, and \eqref{BFchoice} we see that

\begin{lemma} \label{betaphi}
For the Min(m) and Median(2m-1) rules we have
$\beta_\phi=m\beta$ and 
\beq
1-\tau_\phi = 1-\tau + (m-1)(2-\tau) = 2m-1-m\tau 
\label{tauphi}
\eeq
For the Max(m) and BF vertex rules $\beta_\phi=\beta$ and $\tau_\phi=\tau$.
\end{lemma}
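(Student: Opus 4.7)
The plan is to handle the two claims ($\beta_\phi$ and $\tau_\phi$) separately, each reducing to a short probabilistic or scaling computation.

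For $\beta_\phi$, I would reinterpret $\theta_\phi(t) = 1 - \sum_u \phi(u,t)$ as the probability that the vertex selected by the rule lies in the giant component. Since the $m$ (or $2m-1$) sampled vertices are independent and each lies in the giant with probability $\theta(t)$, one can read off $\theta_\phi$ directly for each rule. For Min($m$), every sample must be giant, so $\theta_\phi=\theta^m$. For Median($2m-1$) the median is infinite iff at least $m$ of the $2m-1$ samples are, giving
$$\theta_\phi = \sum_{k=m}^{2m-1}\binom{2m-1}{k}\theta^k(1-\theta)^{2m-1-k}\sim \binom{2m-1}{m}\theta^m$$
as $\theta\downarrow 0$. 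For Max($m$), $\theta_\phi=1-(1-\theta)^m\sim m\theta$. For the BF vertex rule, $v_{m+1}$ is consulted exactly when none of $v_1,\ldots,v_m$ is isolated (probability $(1-p(1,t))^m$), in which case it lies in the giant with probability $\theta(t)$, so $\theta_\phi=(1-p(1,t))^m\,\theta\sim (1-p(1,t_c))^m\,\theta$. Substituting $\theta(t)\sim C(t-t_c)^\beta$ then yields the claimed values of $\beta_\phi$.

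For $\tau_\phi$, I would substitute the scaling form \eqref{nearcr} into the asymptotic formulas \eqref{minasy}, \eqref{medasy}, \eqref{maxasy}, and \eqref{BFchoice}, and read off the power of $s$. At $t=t_c$ we have $p(s,t_c)\sim f(0)\,s^{1-\tau}$, and a tail integration gives $\hat P(u,t_c)\sim f(0)\,u^{2-\tau}/(\tau-2)$ provided $\tau>2$. For the Min rule,
$$\phi(u,t_c)\sim m\,p(u,t_c)\,\hat P(u,t_c)^{m-1}\sim c\,u^{1-\tau + (m-1)(2-\tau)} = c\,u^{2m-1-m\tau},$$
so $1-\tau_\phi = 2m-1-m\tau$. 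For Median, $P(u,t_c)^{m-1}\to 1$ as $u\to\infty$ (since $\theta(t_c)=0$), so the same power emerges from \eqref{medasy}. For Max and BF vertex, $\phi(u,t)\sim c\,p(u,t)$ for large $u$, so $\tau_\phi=\tau$ is immediate.

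The main obstacle is justifying that the $t=t_c$ asymptotics actually produce the scaling ansatz \eqref{phinearcr} for $\phi(s,t)$ in a full neighborhood of $t_c$, not merely at criticality. This requires the usual regularity on the scaling function $f$ (positivity of $f(0)$, integrability at infinity so the tail sums converge when $\tau>2$, and uniform control on compact sets) so that products and tail sums of scaling functions are themselves scaling functions with the predicted exponent. The BF case additionally needs $p(1,t_c)<1$, which is automatic because a positive fraction of vertices lies in clusters of size at least two at any positive time.
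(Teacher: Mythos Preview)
Your proposal is correct and follows essentially the same route as the paper: the paper simply cites the asymptotic formulas \eqref{minasy}, \eqref{medasy}, \eqref{maxasy}, \eqref{BFchoice} and reads off the exponents, which is exactly what you do for $\tau_\phi$. Your $\beta_\phi$ argument via the direct probabilistic interpretation of $\Phi=1-\sum_u\phi(u,t)$ as the probability the selected vertex lies in the giant component is a cleaner and more explicit derivation than the paper gives (the paper leaves this implicit, effectively recovering $\beta_\phi$ from $\tau_\phi$ and $\sigma_\phi=\sigma$ via $\beta_\phi=(\tau_\phi-2)/\sigma$), but it is the same idea at the same level of rigor.
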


Using \eqref{2cODE} we can show, see Lemma \ref{derperc} and \ref{dermean} that if $\Phi = 1 - \sum_v \phi(v,t)$ then
\begin{align}
\partial \theta(t)/\partial t & = 2 \langle s \rangle_\phi \Phi 
\label{betaDE}\\
\partial \langle s \rangle_P/\partial t & =2\langle s \rangle^2_\phi 
- 2\langle s^2\rangle_\phi \Phi
\label{gamDE}
\end{align}
From this it follows that
\begin{align}
 \beta - 1 & = \beta_\phi - \gamma_\phi \
\label{desc1} \\
-\gamma -1 &= -2\gamma_\phi 
\label{desc2a} \\
-\gamma -1 &= \beta_\phi - \gamma_\phi  - \Delta_\phi 
\label{desc2b}
\end{align}
On the second and third lines the two formulas come from $t \uparrow t_c$ and $t \downarrow t_c$ in \eqref{gamDE}.

Finally using a calculation from Appendix E of da Costa et al \cite{daCosta-scaling} we have the remarkable result which relates the two variable in the scaling relation
\beq
\sigma = (\tau-1) + 2\alpha + 2
\label{desc3}
\eeq
The proof begins by letting $f(x) = x^{\tau-1}\tilde f(x)$,  and $g(x) = x^{-\alpha}\tilde g(x)$ in order to rewrite the scaling formulas as
\begin{align*}
p(s,t) & = s^{1-\tau} f(s\delta^{1/\sigma}) 
= \delta^{(\tau-1)/\sigma} \tilde f(s\delta^{1/\sigma}) \\
\phi(s,t) & = s^{\alpha} g(s\delta^{1/\sigma}) 
= \delta^{-\alpha/\sigma} \tilde g(s\delta^{1/\sigma})
\end{align*} 
in order to more easily differentiate with respect to $\delta-|t-t_c|$.
One then substitutes $\phi(v) = \phi(v)-\phi(s)+\phi(s)$ in \eqref{2cODE} of $\partial p(s,t)/\partial t$ so that all of the terms have the large time $s$ in them. When the smoke clears after all of the change of variables, every term on the right0hand side is of order $\delta^{(-2\alpha-2)/\sigma}$ and equating this to the order of the left-hand side gives the result.

Using our scaling relationships we can analyze our examples

\begin{theorem} \label{mainth}
For the Min(m) and Median(2m-1) rules we have
\begin{align*}
\gamma_\phi & = 1 + (m-1)\beta\\
\gamma & = 1 + 2(m-1)\beta\\
\Delta = 1/\sigma & = 1 + (2m-1)\beta \\
\tau - 2 & =  \frac{\beta}{1 + (2m-1)\beta}
\end{align*}
\end{theorem}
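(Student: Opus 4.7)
The plan is to derive Theorem \ref{mainth} by pure algebraic manipulation of three ingredients already in hand: Lemma \ref{betaphi}, which supplies $\beta_\phi = m\beta$ for both the Min($m$) and Median($2m-1$) rules; the three exponent identities \eqref{desc1}--\eqref{desc2b}, which come from reading off the leading-order behavior on each side of $t_c$ in the ODEs \eqref{betaDE}--\eqref{gamDE}; and the scaling relations \eqref{scbeta}--\eqref{scDelta} applied to both $p$ and $\phi$, which is legitimate because the ansatz \eqref{phinearcr} carries the \emph{same} exponent $\sigma_\phi = \sigma$ as \eqref{nearcr}. In particular $\Delta_\phi = 1/\sigma = \Delta$, so there is no separate $\sigma_\phi$ unknown to chase.

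I would execute the substitutions in the following order. First, plug $\beta_\phi = m\beta$ into \eqref{desc1} and solve to get $\gamma_\phi = \beta_\phi - \beta + 1 = 1 + (m-1)\beta$, which is the first formula in the theorem. Second, read \eqref{desc2a} as $\gamma = 2\gamma_\phi - 1$ and substitute the value just found, yielding $\gamma = 1 + 2(m-1)\beta$. Third, equate the right-hand sides of \eqref{desc2a} and \eqref{desc2b} (both equal $-\gamma - 1$) to eliminate that common quantity; this gives $-2\gamma_\phi = \beta_\phi - \gamma_\phi - \Delta_\phi$, hence
$$\Delta_\phi = \beta_\phi + \gamma_\phi = m\beta + 1 + (m-1)\beta = 1 + (2m-1)\beta,$$
and since $\Delta = \Delta_\phi = 1/\sigma$, this is the third identity. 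Fourth, invoke \eqref{scbeta} in the form $\tau - 2 = \beta\sigma$ and substitute $\sigma = 1/(1 + (2m-1)\beta)$ from the previous step to obtain the final formula $\tau - 2 = \beta / (1 + (2m-1)\beta)$.

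The ``hard part'' is almost entirely upstream of this statement. The genuine work lies in (i) extracting \eqref{desc1}--\eqref{desc2b} from the ODEs \eqref{betaDE}--\eqref{gamDE} by matching powers of $|t-t_c|$ using the scaling forms on each side of $t_c$, and (ii) verifying that $\phi$ inherits a scaling form \eqref{phinearcr} with $\sigma_\phi = \sigma$ so that \eqref{scbeta}--\eqref{scDelta} can legitimately be written with $\phi$-subscripts. Once those pieces are assembled (together with Lemma \ref{betaphi}, whose content is just plugging \eqref{minasy} or \eqref{medasy} into the scaling ansatz and matching powers), Theorem \ref{mainth} reduces to the four-line linear calculation above, with $\beta$ left as the single free exponent that parametrizes the family.
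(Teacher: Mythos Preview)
Your derivation of the first two formulas ($\gamma_\phi$ and $\gamma$) via \eqref{desc1} and \eqref{desc2a} is identical to the paper's. For the last two formulas, however, you take a genuinely different and more economical route. The paper obtains $1/\sigma = 1+(2m-1)\beta$ by invoking the relation \eqref{desc3}, $\sigma = (\tau-1)+2\alpha+2$, whose derivation (Section~\ref{sec:appE}) requires the full scaling analysis of the ODE \eqref{2cODE} \`a la da Costa et al.; it then combines \eqref{desc3} with $\alpha = 2m-1-m\tau$ from Lemma~\ref{betaphi} and \eqref{scbeta} to finish. You bypass \eqref{desc3} entirely: equating the right-hand sides of \eqref{desc2a} and \eqref{desc2b} gives $\Delta_\phi = \beta_\phi+\gamma_\phi$ (equivalently, this is just the $\phi$-version of $\beta+\gamma=1/\sigma$, which follows by adding \eqref{scbeta} and \eqref{scgamma}), and then the assumption $\sigma_\phi=\sigma$ stated after \eqref{phinearcr} forces $\Delta=1/\sigma=\Delta_\phi$. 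What your approach buys is a shorter proof resting only on the moment ODEs \eqref{betaDE}--\eqref{gamDE} and the standard scaling relations; what the paper's approach buys is an independent consistency check, since \eqref{desc3} is a nontrivial constraint extracted directly from the evolution equation rather than from its first two moments.
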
 

\mn
The first two relationships  are immediate consequences of \eqref{desc1} and \eqref{desc2a}. The other two equations require more algebra so the proof is delayed to Section \ref{sec:pfTh1}. Note that using the four equations in Theorem \ref{mainth}, all of the critical exponents can be computed if $\beta$ is known. In contrast, jn ordinary percolation one needs to know the two exponents that appear in the basic scaling relationship to do this. The reduction from two to one is due to \eqref{desc3}.

\mn
{\bf Example 1. Min(m) rule.} In this case our result was first proved by daCosta et al \cite{daCosta-scaling}.  The results in the first three rows of the table below are based on their simulations. They follow the convention that the $m$th graph occurs at time $m/n$ so we have multiplying their estimates of $t_c$ by 2:
\begin{center}
\begin{tabular}{ccccc}
$m$ & 1 & 2 & 3 & 4 \\
$t_c$ & 1 & 1.8464 & 1.9636 & 1.9898 \\
$\beta$ & 1 & 0.0555 & 0.0104 & 0.0024 \\
$\tau$ & 5/2 & 2.0476 & 2.0099 & 2.0024\\
$1/\sigma$&	2 &	1.1665	&1.0520	& 1.0168 \\
$\gamma_P$ &	1	&1.1110	& 1.0416	& 1.0144\\
$\gamma_Q$ & 1 & 1.0555&	1.0208 & 1.0072
\end{tabular}
\end{center}

\noindent
 The results in this table suggest that as $m\to\infty$
\beq
t_c \to 2 \qquad \tau \to 2 \qquad \sigma, \gamma_P, \gamma_Q \to 1
\label{dCconj}
\eeq
It is clear from the table that $\beta\to 0$ but the question is: how fast? Figure 1
suggests that $\beta \to 0$ exponentially fast. If so then $m\beta \to 0$ and the conjectures about critical exponents in \eqref{dCconj} follow

\begin{figure}[ht]
\begin{center}
\includegraphics[height=3.0in,keepaspectratio]{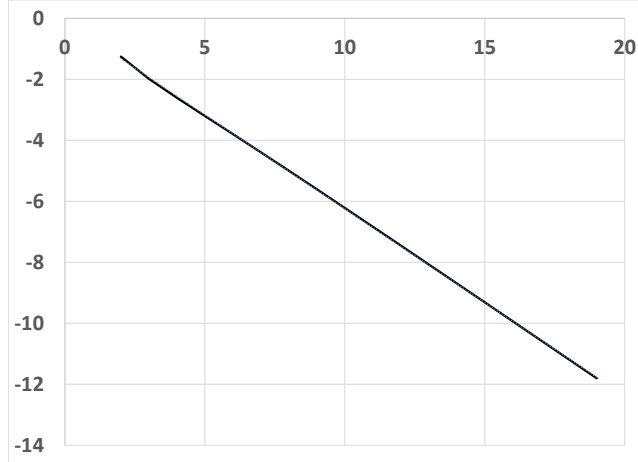}
\caption{Using data from Table I in \cite{daCosta-critexp} which gives estimates of $\beta$ for $m=2$ to 20 and plotting $\log_{10}\beta$ versus $m$ we get a straight line $\log_{10}\beta = -0.615m - 0.0915$. For the fit $r^2 = 0.9999$. } 
\end{center}
\end{figure}

\mn
{\bf Example 2. Median(2m-1) rule.} It would be interesting to know if this models and Min(m) have the same value of $\beta$ and hence all the critical exponents are the same. This conjecture is based on the notion of a {\bf universality class} of models. For example if one considers bond or site percolation on different two-dimensional lattices (rectangular, triangular, hexagonal, etc) then the critical values for the existence of an infinite component are different but the critical exponents are expected to be the same. Likewise the $d$-dimensional contact process ie expected to have exponents that are do not depend on the neighborhood used to define the model and will have the same cirtical exponents as oriented percolation in $(d+1)$ dimensions. 

Theorem \ref{mainth} implies that the Min(m) models are in different universlaity classes (if the values of $\Delta$ are the same for $\ell$ and $m$ then 
$(2\ell-1)\beta_\ell = (2m-1)\beta_m$ the values of $\tau-2$ must be different). Sabbir and Hassam \cite{SabHas} claim that the sum and the product rule defined in the next section have the same critical exponents, but otherwise it seems that within this class of models, the universality classes are small. 

The next result treats our other two models.

\begin{theorem}
In the case of the Max(m) and BF vertex rules we have
\begin{align*}
& \gamma_\phi =  \gamma = 1\\
& \Delta = 1/\sigma  = 1 + \beta\\
& \tau - 2 =  \frac{\beta}{1 +\beta} 
\end{align*}
\end{theorem}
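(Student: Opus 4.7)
The plan is to follow the same pipeline used for the Min/Median theorem, but to simplify at the very first step using the fact that the choice distribution is essentially proportional to the cluster-size distribution. Specifically, Lemma \ref{betaphi} tells us that for the Max($m$) and BF vertex rules we have $\beta_\phi=\beta$ and $\tau_\phi=\tau$, hence also (via $\alpha=1-\tau_\phi$) the exponent $\alpha=1-\tau$. This is the crucial input that collapses the two-parameter system into a one-parameter one, and makes each of the three claimed identities a short algebraic consequence of the scaling relations already derived.

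First I would get $\gamma_\phi=1$. Plugging $\beta_\phi=\beta$ into \eqref{desc1} gives $\beta-1=\beta-\gamma_\phi$, so $\gamma_\phi=1$. Substituting this into \eqref{desc2a} then yields $-\gamma-1=-2$, i.e. $\gamma=1$, which handles the first line of the theorem. (As a consistency check, \eqref{desc2b} combined with the above will force $\Delta_\phi=2$, matching what \eqref{scDelta} would give using $\sigma_\phi=\sigma$ once $\sigma$ is computed below.)

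Next I would pin down $\sigma$ via the da Costa-style scaling identity \eqref{desc3}, $\sigma=(\tau-1)+2\alpha+2$. With $\alpha=1-\tau$ this collapses to $\sigma=3-\tau$. Then I combine this with \eqref{scbeta}, which says $\beta=(\tau-2)/\sigma$. Clearing denominators gives $\beta(3-\tau)=\tau-2$, so
\begin{equation*}
\tau-2 = \frac{\beta}{1+\beta},\qquad \sigma = 3-\tau = \frac{1}{1+\beta},
\end{equation*}
and therefore $\Delta=1/\sigma=1+\beta$ by \eqref{scDelta}. This completes the three stated identities.

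The calculation is short, so the real substance is not the algebra but verifying that the hypotheses underlying the tools we invoke actually apply. Concretely, the main point to check is that the scaling ansatz \eqref{nearcr}--\eqref{phinearcr} is valid for $p(s,t)$ and $\phi(s,t)$ in both of these models (so that \eqref{desc3} is available), and that $\phi$ genuinely satisfies $\phi(u,t)\sim m\,p(u,t)$ or its BF analogue uniformly enough in the critical window that the identifications $\beta_\phi=\beta$ and $\tau_\phi=\tau$ transfer from tail equivalence to equality of exponents. Once this is granted --- and it is the only nontrivial ingredient --- the theorem reduces to the one-line algebra above.
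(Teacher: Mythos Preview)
Your proposal is correct and follows the same overall pipeline as the paper: use Lemma~\ref{betaphi} to get $\beta_\phi=\beta$, feed this into \eqref{desc1} and \eqref{desc2a} to obtain $\gamma_\phi=\gamma=1$, and then extract $\sigma$ and $\tau$ from the remaining scaling relations.

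There is one small difference worth noting. For the second half you invoke the da Costa identity \eqref{desc3} with $\alpha=1-\tau$ to get $\sigma=3-\tau$, and then combine with \eqref{scbeta}. The paper takes a shorter route here: adding \eqref{scbeta} and \eqref{scgamma} gives $\beta+\gamma=1/\sigma=\Delta$ directly, so once $\gamma=1$ is known you immediately have $\Delta=1/\sigma=1+\beta$, and then \eqref{scbeta} gives $\tau-2=\beta\sigma=\beta/(1+\beta)$. This avoids \eqref{desc3} altogether, which is mildly preferable since \eqref{desc3} rests on the more delicate Appendix~E computation. Your route is not wrong --- indeed $\sigma=3-\tau$ is exactly what \eqref{scgamma} says when $\gamma=1$, so you are rederiving the same fact --- but the paper's observation $\Delta=\beta+\gamma$ is the more economical way to close the argument.
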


\noindent
Again if we know $\beta$ we can calculate all of the exponents, but this time we want to do more.  As explained in the next section, the BF vertex rule is a bounded size rule,  so it follows from work of Riordan and Warnke \cite{RW-bs} that all the critical exponents are the same \ER  
We think it might be possible to use the proof of Theorem \ref{BBWth} to prove $\tau=5/2$ for the Max(m) rule or any other system in which $\phi \sim c p$.

While in principle we can study two-edge rules in which the choice functions are different, in practice we can only analyze the case in which the first choice is a randomly chosen vertex, 

\mn
{\bf Example 5. Adjacent edge rule.} D'Souza and MItzenmacher \cite{DS-M} considered the case where the first vertex is chosen at random and the second according the min rule. They restricted their attention to the case $m=2$ but here we consider the general situation.

Generalizing the proofs of  Lemmas \ref{derperc} and \ref{dermean} in Section \ref{sec:conODE}
\begin{align*}
\partial \theta(t)/\partial t & = \langle s \rangle_p \Phi 
+ \langle s \rangle_\phi \theta  \\
\partial \langle s \rangle_P/\partial t & =2\langle s \rangle_\phi \langle s \rangle_p
- \langle s^2\rangle_p \Phi - \langle s^2\rangle_\phi \theta
\end{align*}
From this it follows that
\begin{align}
 \beta - 1 & = \min\{ \beta - \gamma_\phi , \beta_\phi - \gamma \}
\label{aesc1} \\
-\gamma -1 &= \gamma_\phi  - \gamma
\label{aesc2a}
\end{align}
where in the second equation we have only used the limit $t \uparrow t_c$.
Generalizing the proof of \eqref{desc3} we can conclude that
\beq
\sigma = \alpha+2
\label{aesc3}
\eeq
Combining our results we have

\begin{theorem}
In the case of the alternative edge rules we have
\begin{align*}
& \gamma_\phi = 1 \\
& \gamma = 1 + (m-1)\beta \\
& \Delta = 1/\sigma  = 1 +m \beta\\
& \tau - 2 =  \frac{\beta}{1 +m\beta} 
\end{align*}
\end{theorem}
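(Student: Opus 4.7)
The plan is to imitate the proof of Theorem \ref{mainth}, this time using the alternative-edge identities \eqref{aesc1}, \eqref{aesc2a}, \eqref{aesc3} in place of \eqref{desc1}--\eqref{desc3}, together with the basic scaling relations \eqref{scbeta}--\eqref{scDelta} applied to both the $p$- and $\phi$-variables. The initial observation is that in the alternative edge rule the second vertex is chosen by the Min(m) rule, so $\phi$ is precisely the Min(m) choice distribution. Consequently Lemma \ref{betaphi} applies and gives $\beta_\phi = m\beta$ together with $\tau_\phi = 2 + m(\tau - 2)$; these identities depend only on the relation $\phi(u,t) \sim m p(u,t) \hat P(u,t)^{m-1}$ and not on how $p$ evolves, so they remain valid here even though the evolution of $p$ is governed by the alternative-edge ODE rather than the two-choice ODE.

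With these in hand, the derivation is a short piece of algebra. Substituting $\alpha = 1 - \tau_\phi$ and Lemma \ref{betaphi} into \eqref{aesc3} gives $\sigma = 3 - \tau_\phi = 1 - m(\tau-2)$. Combining with the basic scaling identity $\beta = (\tau-2)/\sigma$, so $\tau - 2 = \beta\sigma$, yields $\sigma(1+m\beta) = 1$, hence
\[
\sigma = \frac{1}{1+m\beta}, \qquad \Delta = \frac{1}{\sigma} = 1+m\beta, \qquad \tau - 2 = \beta\sigma = \frac{\beta}{1+m\beta}.
\]
The remaining two exponents are read off from the scaling relations: $\gamma_\phi = (3-\tau_\phi)/\sigma = \sigma/\sigma = 1$, and $\gamma = (3-\tau)/\sigma = (1+m\beta)\bigl(1 - \beta/(1+m\beta)\bigr) = 1 + (m-1)\beta$, which recovers all four identities in the theorem.

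As a sanity check, I would verify consistency with \eqref{aesc1} and \eqref{aesc2a}. Matching orders in $\partial \langle s\rangle_p/\partial t = 2\langle s\rangle_\phi \langle s\rangle_p$ at $t\uparrow t_c$ (where $\Phi$ and $\theta$ vanish) forces $-\gamma - 1 = -\gamma_\phi - \gamma$, recovering $\gamma_\phi = 1$ independently, and substituting $\gamma_\phi = 1$, $\beta_\phi = m\beta$, $\gamma = 1+(m-1)\beta$ into \eqref{aesc1} makes both arguments $\beta - \gamma_\phi$ and $\beta_\phi - \gamma$ equal to $\beta - 1$, so both terms of the $\partial\theta/\partial t$ equation contribute at the same order, which is the generic balanced situation. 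The main obstacle in this approach is not the algebra above but the derivation of \eqref{aesc3} itself, which requires generalizing the Appendix E scaling calculation of \cite{daCosta-scaling} to an ODE in which $p$ and $\phi$ play asymmetric roles; once that identity and the formulas for $\partial\theta/\partial t$ and $\partial\langle s\rangle_p/\partial t$ are established, the four exponents follow from Lemma \ref{betaphi} and the basic scaling relations in the two or three lines displayed above.
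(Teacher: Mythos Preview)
Your proposal is correct and follows the same overall logic as the paper: invoke Lemma~\ref{betaphi} to get $\beta_\phi=m\beta$ and $\alpha=1-\tau_\phi=2m-1-m\tau$, then combine \eqref{aesc3} with the basic scaling relations \eqref{scbeta}--\eqref{scDelta} to extract the four exponents. The algebra is the same at the level of $\sigma=1-m(\tau-2)$ and $\tau-2=\beta/(1+m\beta)$.

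There is, however, one genuine difference worth noting. The paper obtains $\gamma_\phi=1$ from the ODE relation \eqref{aesc2a} and then gets $\Delta=\beta_\phi+\gamma_\phi$ from the $\phi$-scaling identities; to recover $\gamma=1+(m-1)\beta$ it must \emph{assume} that the two terms in \eqref{aesc1} balance, i.e.\ $\beta-\gamma_\phi=\beta_\phi-\gamma$, and the authors flag this as an unproven hypothesis. You instead read off $\gamma=(3-\tau)/\sigma$ directly from \eqref{scgamma} once $\tau$ and $\sigma$ are known, and only afterwards verify that the two terms in \eqref{aesc1} happen to coincide. This is a cleaner route: it derives $\gamma$ from the scaling ansatz alone and demotes the balance condition in \eqref{aesc1} from an assumption to a consistency check. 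The price is that your argument leans entirely on \eqref{nearcr} and \eqref{aesc3}, whereas the paper's derivation of $\gamma_\phi$ and $\Delta$ uses the ODE identity \eqref{aesc2a} as an independent input.
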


\noindent
At the moment we are only able to prove the second result under the assumption that
$ \beta - \gamma_\phi = \beta_\phi - \gamma $. In support of this belief we refer the reader to the sketch of the proof of \eqref{desc3}.

\clearp

\section{Achlioptas processes} \label{sec:Achpr}

In this section we review previous work.

\subsection{Bounded size rules}

Bohman and Frieze \cite{BohFri} were the first to describe a rule that made percolation come later than in the \ER case. Again $G_0=0$ and edges $e_1, e_1'; e_2, e_2'; e_3, e_3' \ldots$ are chosen at random from the set of all possible edges. If on the $i$th step $e_i$ connects two isolated vertices it is added to the graph, otherwise $e_i'$ is added. Using our convention that the graph after $k$ choices is the sate at time to time $2k/n,$ then as $n\to\infty$ They showed that

\begin{theorem}
There is a constant $c_0 > 1.07$ so that the largest component at time $c_0$ has size bounded by $(\log n)^{O(1)}$
\end{theorem}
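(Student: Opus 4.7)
\mn
\textbf{Proof plan.} I would follow the original Bohman-Frieze strategy, which couples the random graph to a Smoluchowski-type ODE system and then controls the largest component by a branching-process comparison. Let $X_k(i)$ be the number of components of size $k$ after $i$ rounds and set $x_k(t) = X_k(\lfloor tn/2 \rfloor)/n$. A one-step computation gives the conditional drift of each $x_k$ in terms of the singleton density $x_1$ and the sizes of clusters hit by the two candidate edges: with probability $x_1(t)^2 + O(1/n)$ the first edge $e_i$ joins two isolated vertices and is accepted (merging two singletons), and otherwise $e_i'$ is used and behaves like a random edge in \ER. Wormald's differential equation method then shows that with high probability each $x_k(t)$ is uniformly close on $[0,T]$ to the solution $\bar x_k(t)$ of the corresponding infinite ODE system.

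\mn
Next I would control the susceptibility $\chi_1(t) = \sum_k k\,\bar x_k(t)$. Since merging two clusters of sizes $u$ and $v$ increases $\sum_w |\CC(w)|$ by $2uv$, summing the weighted ODEs gives
\[
\chi_1'(t) = \bar x_1(t)^2 + \bigl(1 - \bar x_1(t)^2\bigr)\chi_1(t)^2,
\]
compared with $\chi_1'(t) = \chi_1(t)^2$ for \ER. The singleton density $\bar x_1$ satisfies its own (coupled) ODE, starts at $1$, and remains strictly positive on an interval past $t=1$, so the effective coefficient $1 - \bar x_1^2$ is bounded strictly below $1$ near $t=1$, and numerical integration of the two-dimensional system in $(\bar x_1,\chi_1)$ gives $\chi_1(t) < \infty$ for all $t$ up to some explicit $c_0 > 1.07$.

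\mn
Finally, I would deduce the polylogarithmic bound on $|\CC_{\max}|$. Expose the cluster of a fixed vertex $v$ by breadth-first search: conditionally on the exposed portion, each newly revealed neighbor lies in a cluster of size distributed roughly like the size-biased law $k\bar x_k/\chi_1$, and since $\chi_1(c_0) < \infty$ the exploration is dominated by a subcritical branching process. Standard tail bounds on the total progeny then give that the cluster of $v$ exceeds $(\log n)^C$ with probability $o(1/n)$ once $C$ is large enough in terms of $\chi_1(c_0)$, and a union bound over the $n$ vertices completes the proof.

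\mn
The main obstacle is making the branching-process domination rigorous throughout an exploration of size up to $(\log n)^C$, because the environment is not static and Wormald concentration controls only finitely many $x_k$ at a time. One must therefore either truncate at a slowly growing cutoff $K = K(n)$ and bound the contribution from $k > K$ separately using finiteness of $\chi_1$, or set up a direct edge-by-edge coupling to a dominating branching process in the spirit of Riordan-Warnke \cite{RW-bs}.
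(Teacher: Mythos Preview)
The paper does not actually prove this theorem: it appears in Section~\ref{sec:Achpr} as a cited result of Bohman and Frieze \cite{BohFri}, with no argument given. So there is no ``paper's own proof'' to compare against; the theorem is purely background.

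That said, your sketch is a faithful outline of the original Bohman--Frieze argument (differential-equation tracking of the component profile, an ODE showing the susceptibility blows up strictly after $t=1$, and a branching-process domination to turn finite susceptibility into a polylogarithmic bound on the largest component). One small slip: with your definitions $X_k=$ number of components of size $k$ and $x_k=X_k/n$, the sum $\sum_k k\,x_k$ equals $1$, not the susceptibility. You want either $\chi_1=\sum_k k^2 x_k$, or redefine $x_k$ as the fraction of \emph{vertices} in size-$k$ components; the displayed ODE $\chi_1'=\bar x_1^2+(1-\bar x_1^2)\chi_1^2$ is then correct in the paper's time scaling. Your final paragraph correctly identifies the real technical work --- handling the tail $k>K$ while Wormald only controls finitely many coordinates --- and both remedies you suggest (truncation plus a susceptibility bound on the tail, or a direct coupling) are the standard ways to close this gap.
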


\noindent
This is an example of a {\bf bounded size rule}. In these systems the decisions about which edge to add is based on the sizes of the components of the vertices chosen, with all of the components of size $\kappa_i \ge K$ being treated the same. Spencer and Wormald \cite{SpeWor} developed a number of important results for the bounded-size case. To state their ``main result let 
$$
S(G) = \frac{1}{n} \sum_v |{\cal C}(v)| = \frac{1}{n} |{\cal C}_i^2|
$$
be the {\it susceptibility}, which in percolation terminology is just the mean cluster size.
They showed, among other things, that

\begin{theorem} The critical value $t_c$ for the emergence of a giant cmponents can be defined by $\lim_{t\uparrow t_c} S(t) = \infty$.
\end{theorem}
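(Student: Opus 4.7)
My plan is to establish the equality $t_c = t_{\star}$, where $t_{\star} := \sup\{t : \limsup_{n\to\infty} S(t) < \infty\}$ is the blow-up time of the susceptibility in the $n\to\infty$ limit. Every step either adds an edge between two distinct clusters of sizes $a$ and $b$, which changes $\sum_i |{\cal C}_i|^2$ by $(a+b)^2 - a^2 - b^2 = 2ab \geq 0$, or adds an internal edge and leaves $S$ unchanged. Hence $S$ is monotone non-decreasing in $t$, and $t_{\star}$ is well-defined.

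The direction $t_{\star} \leq t_c$ is immediate: a component of size $\alpha n$ with $\alpha > 0$ contributes $\alpha^2 n$ to $\sum_i |{\cal C}_i|^2$, forcing $S(t) \to \infty$ as $n\to\infty$. Contrapositively, $\limsup_n S(t) < \infty$ rules out a giant component at time $t$, so $t < t_{\star}$ implies $t < t_c$.

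For the harder direction $t_c \leq t_{\star}$, I would exploit the bounded-size hypothesis. Let $N_k(t) = C_k(t)/n$ for $k \leq K$. Because the rule's choice depends on component sizes only through the indicators $\{\kappa_i \leq K\}$, the rate at which two clusters of sizes $a$ and $b$ are merged can be written as a function of $N_1,\ldots,N_K$ and the large-cluster mass $M_{>K}(t) = 1 - \sum_{k \leq K} k N_k(t)$. This closure yields a differential equation of the form $dS/dt = \alpha(t) S^2 + \beta(t) S + \gamma(t)$ valid on any interval where the small-cluster densities stay bounded. Wormald's differential-equation method then shows that the stochastic $S(t)$ concentrates around the deterministic ODE solution on any sub-interval where that solution stays finite, so $S(t)$ remains bounded up to the ODE blow-up time $t^{\rm ODE}$, and hence $t^{\rm ODE} \leq t_{\star}$.

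The main obstacle is the sprinkling step showing that a giant component appears at or before $t_{\star}$. At times just below blow-up, susceptibility diverges because many clusters of intermediate size $\omega(1)$ but $o(n)$ coexist. In a bounded-size Achlioptas step the $2m$ sampled vertices are uniformly random, so with positive probability all of them lie in clusters of size larger than $K$; once that happens the rule cannot distinguish among them by size, so the resulting merges behave like a size-biased random matching among the large clusters. A branching process argument analogous to the classical one for \ER at $t=1$ then shows that a constant density of such edges suffices to coagulate a positive fraction of the vertices into a single component. Combining this with $t_{\star} \leq t_c$ yields the characterization $t_c = t_{\star}$.
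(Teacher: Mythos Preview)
The paper does not supply its own proof of this statement: it is quoted in Section~\ref{sec:Achpr} as a result of Spencer and Wormald \cite{SpeWor}, cited without argument, as part of the review of prior work on bounded-size rules. So there is no in-paper proof to compare against.

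Your outline is essentially the Spencer--Wormald strategy: track the small-cluster densities $N_1,\ldots,N_K$ and the susceptibility $S$ by a finite system of ODEs via Wormald's differential-equation method, identify the blow-up time of the deterministic system, and then argue by sprinkling that a giant appears once $S$ is large. Two comments. First, the quadratic form $dS/dt=\alpha(t)S^2+\beta(t)S+\gamma(t)$ is correct in spirit but not automatic: the drift of $S$ is a sum of $2ab$ over merging pairs weighted by rule-dependent probabilities, and extracting the $S^2$ term requires separating the ``both endpoints in clusters of size $>K$'' contribution, which is exactly where the bounded-size hypothesis is used (the rule cannot distinguish such vertices, so that piece reduces to a size-biased Erd\H{o}s--R\'enyi step). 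Second, your final paragraph is the substantive part and is only asserted; in \cite{SpeWor} the corresponding argument---that diverging susceptibility plus $\epsilon n$ further steps produces a linear-size component---is a real piece of work involving a comparison with a supercritical branching process on the large-cluster vertices. What you have written is an accurate plan, but it is a plan rather than a proof.
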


As in ordinary percxolation, when $t>t_c$ there is a giant component containing a positive fraction of the sites, while for $t<t_c$ the cluster size distribution has $P(\kappa\ge s ) \le K e^{-cs}$ where $K$ and $c$ depend on $t$. 

In 2013, Bhamidi, Budhirja and Wang \cite{BBW} showed that the behavior of the Bohman-Frieze process near the critical values is the same as in the \ER case. To be precise if we consider the behavior at $t_c + r/n^{1/3}$ with $-\infty < r < \infty$ then as $n\to\infty$ the system converges to the multiplicative coalecsent in which a cluster of size $x$ and a cluster of size $y$ merge at rate $xy$. See Aldous \cite{multcoal} for the corresponding  result for \ER graphs. 

\begin{theorem} \label{BBWth}
There are constants $\alpha,\beta>0$  the Bohman-Frieze process at time  and let $\CC^n_i$ be the clusters writtein in order of decreasing size then then the vector of component sizes 
$$
\frac{\beta^{1/3}}{n^{2/3}} |\CC^n_i|(t_c + \alpha\beta^{2/3}r/n^{1/3})  \qquad 
$$
viewed as a function of $r \in(-\infty,\infty)$ converges to multiplicative coalescent.
\end{theorem}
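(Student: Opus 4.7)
The plan is to follow the strategy pioneered by Aldous \cite{multcoal} for the \ER multiplicative-coalescent limit, adapted to the bounded-size setting of Bohman-Frieze. The starting observation is that in any bounded-size rule the sub-threshold cluster densities $\bigl(p(1,t),\ldots,p(K,t)\bigr)$ satisfy a closed finite-dimensional ODE on the deterministic scale, while the susceptibility $S(t) = \langle s\rangle_p$ satisfies a scalar ODE driven by these densities. A local analysis at the blow-up point gives $1/S(t) \sim \alpha(t_c - t)$ for an explicit $\alpha > 0$ depending only on $p(\cdot,t_c)$, which calibrates the time rescaling in the statement. The rate constant $\beta$ then comes from computing the effective merger rate for pairs of macroscopic clusters: in the classical BF rule two large clusters of sizes $x,y$ merge only if the first sampled edge is not isolated-to-isolated and the second sampled edge joins them, giving effective rate $\beta xy/n$ with $\beta = 1 - p(1,t_c)^2$. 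Combining these pins down the critical window $t_c + \alpha\beta^{2/3}rn^{-1/3}$ and the component rescaling $\beta^{1/3}n^{-2/3}|\CC^n_i|$ in exactly the form appearing in the statement.

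Let $\mathbf X^n(r)$ denote the rescaled vector of large-component masses at time $t_c + \alpha\beta^{2/3}rn^{-1/3}$. I would then establish three ingredients: (a) the low-order densities $p(i,t)$ remain $o(n^{-1/3})$-close to their deterministic trajectories uniformly over the window, by Freedman-type concentration applied to the finite-dimensional ODE; this reduces the large-cluster evolution to a time-inhomogeneous multiplicative coalescent with effective kernel $\beta_n(t)\cdot xy/n$ where $\beta_n(t)\to\beta$; (b) tightness of $\mathbf X^n$ in the Aldous $\ell^2_{\searrow}$ topology, using a priori moment bounds $S(t) \asymp |t-t_c|^{-1}$ and $\chi_2(t)\asymp |t-t_c|^{-3}$ derivable from the same finite-dimensional ODE system; (c) convergence of finite-dimensional distributions via a semimartingale decomposition of bilinear functionals of $\mathbf X^n$, whose limiting drift and quadratic variation match the generator $\sum_{i<j} x_ix_j(\partial_{ij} - \mathrm{id})$ of the multiplicative coalescent.

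The main obstacle is upgrading top-$k$ finite-dimensional convergence in step (c) to convergence in the non-locally-compact space $\ell^2_{\searrow}$: one must rule out mass escaping to or returning from the tail of ranks above $k$ on the critical time scale. Aldous handled this for \ER by an explicit second-moment estimate on the tail mass within the critical window; the same estimate must be reproved in the BF setting using the moment bounds from step (b) together with the kernel coupling from step (a). This is precisely where the bounded-size hypothesis is essential, since it provides the closed low-dimensional system that simultaneously controls $\chi_2$ and the rate $\beta_n(t)$. A subtlety worth flagging is that the drift of $S(t)$ computed from \eqref{2cODE} contains a term involving $\chi_2$ via the large-cluster interaction, so the tail estimate and the kernel-coupling estimate must be closed together rather than sequentially — most likely by a single bootstrap argument on $r \in [-R,R]$ that feeds bounds on $\chi_2$ back into the concentration bound of step (a).
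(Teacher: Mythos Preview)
The paper does not prove this theorem. Theorem~\ref{BBWth} appears in Section~\ref{sec:Achpr}, which opens with ``In this section we review previous work,'' and the theorem is introduced with ``In 2013, Bhamidi, Budhirja and Wang \cite{BBW} showed that \ldots''. It is stated as a cited result from the literature with no accompanying proof, so there is no argument in the paper to compare your proposal against.

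That said, your outline is a reasonable sketch of the strategy actually used in \cite{BBW}: tracking the finite-dimensional bounded-size ODE for the small-cluster densities, extracting the constants $\alpha,\beta$ from the local blow-up of the susceptibility and the effective large-cluster merger rate, and then running an Aldous-style $\ell^2_\searrow$ tightness plus finite-dimensional identification of the limit. The issue you flag about closing the tail-mass estimate and the kernel-coupling estimate together is real and is handled in \cite{BBW} with some care. If your aim is to supply a proof here, you should simply cite \cite{BBW}; if your aim is to reconstruct their argument, what you have is a plausible high-level plan but not yet a proof.
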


Riordan and Warnke \cite{RW-bs} have show that ``bounded-size rules share (in a strong sense)  all the features of the \ER phase transiiton.'' Let $L_j$ be the size of the $j$th largest component and let
$$
S_r = \sum_{\CC} |\CC|/n = \sum_{k \ge 1} k^{r-1} N_k/n
$$
where the first sum is over all components and $N_k$ is the number of vertices that belong to components of size $k$. Writing whp (with high probability) for with probaility tending to 1 as $n\to\infty$ they established (see their Theorem 1.1):

\begin{theorem}\label{RWBS}
Let ${\cal R}$ be a bounded-size rule with critical time $t_c>0$. There are rule dependent positive constants $a, A, c, C, \gamma$ and $B_r$, $r\ge 2$ so that for any $\ep\to 0$ with $\ep^3n \to\infty$ as $n\to\infty$

\mn
1. (Subcritical phase) For any fixed $j \ge 1$ and $r \ge 2$
\begin{align*}
&L_j(t_cn-\ep n) \sim C\ep^{-2}\log(\ep^3n) \\
&S_r(t_cn-\ep n) \sim B_r \ep^{-2r+3} 
\end{align*}

\mn
2. (Subcritical phase) For any fixed $j \ge 1$ and $r \ge 2$
\begin{align*}
&L_1(t_cn+\ep n) \sim c\ep n \\
&L_2(t_cn+\ep n) =o(\ep n)
\end{align*}

\mn
3. (Critical regime) Suppose that $k = k(n) \ge 1$ and $\ep=\ep(n)>0$ satisfy $k \le n^\gamma$, $\ep^2 n \le \gamma \log n$, $k \to\infty$, and $\ep^3k\to 0$ 
$$
N_k(t_c \pm \ep n) \sim Ak^{-3/2} e^{-a\ep^2 k} n
$$
\end{theorem}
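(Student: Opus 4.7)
The plan is to exploit the fact that a bounded-size rule only distinguishes component sizes up to some cutoff $K$, so the densities $\rho_k(t) = N_k(t)/n$ for $k < K$ together with the truncated susceptibility satisfy a closed system of Smoluchowski-type ODEs. Wormald's differential-equation method then gives $O(n^{-1/2}\log n)$ concentration of the discrete process around its deterministic limit uniformly for $t$ in any compact subinterval of $[0,t_c)$. I would first identify $t_c$ as the blow-up time of the susceptibility $S_2(t)$, and derive $S_r(t_c n - \ep n) \sim B_r \ep^{-2r+3}$ by singularity analysis of the generating function $\psi(z,t) = \sum_k (N_k/n) z^k$ near $z = 1$: the hierarchy of moment ODEs forces the dominant singularity of $\psi$ to be of square-root type at a distance $\Theta(\ep)$ from $z = 1$, which gives the correct power of $\ep$ for each moment.

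For the subcritical cluster-size bound in part 1, I would couple the local cluster-exploration around a uniformly chosen vertex to a subcritical Galton--Watson process whose offspring mean is $1 - c\ep^2 + o(\ep^2)$, which is possible because the bounded-size decision at each merge can be read off from the sizes of the relevant $\rho_k$'s. This yields an exponential tail $P(|\CC(v)| \ge k) \le C_1 e^{-c_1 \ep^2 k}$. The quantity $L_j(t_c n - \ep n)$ is then, to leading order, the $j$-th largest order statistic of $\sim n$ (weakly correlated) copies; a second moment computation on the count of clusters of size $k$ gives concentration, and standard extreme-value reasoning produces the announced $C \ep^{-2} \log(\ep^3 n)$. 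The hypothesis $\ep^3 n \to \infty$ is exactly what is needed to keep this logarithm positive and the cluster-exploration coupling in its regime of validity.

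For part 2 I would use a sprinkling argument in the spirit of Bollob\'as for the \ER graph: run the process up to $t_c n - \delta n$ for a small fixed $\delta > 0$, use part 1 to bound every component by a polylog of $n$, and then run the remaining $(\ep + \delta)n$ steps. Each of those additional steps has probability of order $\ep$ of merging two moderately large components, and a branching-process comparison with offspring mean $1 + c\ep$ shows that a unique component of size $c\ep n$ emerges while all other components remain $o(\ep n)$ whp, with uniqueness forced by a standard isoperimetric second-moment calculation on the would-be runners-up.

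The critical regime of part 3 is the main obstacle. The natural route is to couple, in the window $|t - t_c| \le \gamma n^{-1/3}$, the bounded-size process to the multiplicative coalescent, exactly as in Theorem \ref{BBWth}. Aldous's Brownian-excursion representation produces cluster-size densities of precisely the form $A k^{-3/2} e^{-a\ep^2 k}$: the $k^{-3/2}$ is the ballot/hitting-time exponent for a critical random walk, and the exponential tilt by $e^{-a\ep^2 k}$ comes from a Cram\'er-type shift corresponding to the deviation $\ep$ from criticality. The genuinely hard step is proving that the bounded-size aggregation, whose effective kernel is not multiplicative, only perturbs the coefficients $A$ and $a$ while preserving the universal $k^{-3/2}$-with-Gaussian-tilt shape. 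This reduces to showing that the truncation corrections to the merging kernel are $o(n^{-1/3})$ after the $n^{2/3}$ rescaling of component sizes, which in turn demands $n^{-1/3}\sqrt{\log n}$-level control of the truncated densities $\rho_k$ uniformly across the whole critical window -- a sharp quantitative strengthening of the fluid limit in paragraph one.
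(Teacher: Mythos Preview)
The paper does not contain a proof of this theorem. It appears in Section~\ref{sec:Achpr}, which is explicitly a review of previous work, and is introduced with the sentence ``Writing whp (with high probability) for with probability tending to 1 as $n\to\infty$ they established (see their Theorem 1.1):''. In other words, Theorem~\ref{RWBS} is quoted from Riordan and Warnke~\cite{RW-bs} as background, not proved here. So there is nothing in the present paper to compare your proposal against.

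That said, your outline is broadly in the spirit of the Riordan--Warnke machinery: differential-equation method for the small-$k$ densities, branching-process comparison for cluster tails, sprinkling for the supercritical giant, and a coalescent/excursion picture in the window. A few of your steps would not go through as written, however. The subcritical exploration does \emph{not} couple to a Galton--Watson process with offspring mean $1-c\ep^2$; the susceptibility $S_2$ behaves like $(t_c-t)^{-1}$, so the effective ``mean'' is $1-c\ep$, and the exponential rate $\ep^2$ in the tail comes from the \emph{variance} of the increments, not from the mean drift. Also, part~3 is not established via the multiplicative-coalescent coupling of Theorem~\ref{BBWth}: that coupling gives distributional convergence of rescaled component sizes but does not by itself yield the pointwise asymptotic $N_k \sim A k^{-3/2} e^{-a\ep^2 k} n$ uniformly in $k$ and $\ep$. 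Riordan and Warnke obtain this instead by a direct two-round exposure/branching-process argument with sharp control of the PGF near its singularity, which is substantially more delicate than what you sketch.
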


\subsection{Explosive percolation}

In 2009 Achliptas, D'Souza, and Spencer \cite{ADSS} shouted from the pages of Science that they had found ``Explosive percolation in networks.'' To quote from their paper: 

\begin{quote} \it Here we provide conclusive numerical evidence that unbounded size rules can give rise to discontinuous transitions. For concreteness, we present results for the so-called product rule.\end{quote} 

\noindent
To be precise, and changing notation to avoid conflict with ours, let $L_1(m)$ be the size of the largest cluster when there are $m$ edges, let $m_1$ be the last time $L_1(m)< n^{1/2}$, and let $m_2$ be the first time it is $L(m) > n/2$. Then for large $n$ we have $m_2 - m_1 < n^{2/3}$. 

In 2011 Riordan and Warnke wrote their own Science paper declaring that ``Explosive percolation is continuous.'' Here we will follow the version published the next year in the Annals of Applied Probability. \cite{RWcont}.
The continuity of the phase transition  is proved for a very general set of models

\mn
{\bf $\ell$-vertex rule}. Let $v_m = (v_{m,1}, \ldots v_{m,\ell})$ be an i.i.d.~sequence of vectors in $\{ 1, 2, \ldots n\}^\ell$.  $G_0=\emptyset$ and $G_m$ is obtained from $G_{m-1}$ by adding a possibly empty set of edges $E_m$. $E_m \neq \emptyset$ if the $v_{i,m}$ are in distinct components.

\mn
{\bf Theorem 1 in \cite{RWcont}.} {\it Let ${\cal R}$ be an $\ell$ vertex rule with $\ell \ge 2$. Given any functions $h_L(n)$ and $h_m(n)$ that are $o(n)$ and any constant $\delta>0$ the probability there are $m_1$ and $m_2$ with $L_1(m_1) \le h_L(n)$ and $L_1(m_2) \ge \delta n$ and $m_2 \le m_1 + h_M(n)$ tends to 0 as $n\to\infty$.}

\medskip
To obtain more detailed results they reduced the collection of models.
An $\ell$-vertex rule is  said to be {\bf merging} if whenever $\CC_1$ and $\CC_2$ are distinct components with $|\CC_1|,|\CC_2| \ge \ep n$ then in the next step we have probability $\ge \ep^\ell$ of joining $\CC_1$ to $\CC_2$.

\mn
{\bf Theorem 7 in \cite{RWcont}.} {\it Let ${\cal R}$ be a merging $\ell$-vertex rule. Given constant $0 \le a < b$ and any function $h_m(n) = o(n)$ the probability that there are $m_1$ and $m_2$ with $L_1(m_1) \le an$ and
$L_1(m_2) \ge bn$ and $m_2 \le m_1 + h_m(n)$ tends to 0 as $n\to \infty$.}

\mn
This result gives a proof of Spencer's ``no two giants'' conjecture. If at some time there is a component of size $>\ep n$ in addition to the giant component then when they merge the size will increase by at least $\ep n$ contradicting the last result

The results in \cite{RWcont} are very general and their proofs are ingenious. However, the proof of their Theorem 1 given on page 1457 is based on an argument by contradiction, and it does not seem possible to extract quantitative result. Our goal here is to prove results about the critical behavior of some of these processes, specifically about the values of critical exponents defined in the next section.

\clearp

\section{Proofs of the Theorems} 

\subsection{Proof of Theorem 1}\label{sec:pfTh1}

Lemma \ref{betaphi} implies $\beta_\phi=m\beta$, $\nu=m$, 
so using \eqref{desc1}
$$
\beta-1 = m\beta - \gamma_\phi \quad\hbox{or}\quad
 \gamma_\phi=1+(m-1)\beta
$$
Using the first equality in \eqref{desc2a} $-\gamma - 1 = - 2\gamma_\phi$ so
$$
\gamma = 2 \gamma_\phi + 1 = 1 + 2(m-1)\beta
$$
To prove the next result we note that Lemma \ref{betaphi} implies
$$
\alpha = 2m-1 - m\tau
$$ 
and using \eqref{desc3} $\sigma=(\tau-1) +2\alpha +2$
\begin{align}
\sigma  &= \tau-1 + 2(2m-1-m \tau) + 2
\nonumber \\
& = 1 + 2(2m-1) - (2m-1) \tau =1 - (2m-1)(\tau-2)
\label{sigeq}
\end{align}
Dividing each side by $\sigma$ and using \eqref{scbeta} $(\tau-2)/\sigma=\beta$ gives
$$
1/\sigma  = 1 + (2m-1)\beta 
$$
Rearranging \eqref{sigeq} then using the last result twice
$$
\tau - 2 = \frac{1-\sigma}{2m-1} = \frac{(1/\sigma -1)/(2m-1)}{1/\sigma}
=  \frac{\beta}{1 + (2m-1)\beta} 
$$
which completes the proof.

\subsection{Proof of Theorem 2}\label{sec:pfTh2}

Since $\phi$ is Min(m), Lemma \ref{betaphi} implies $\beta_\phi=m\beta$ so \eqref{desc1} implies
$\gamma_\phi=1$ and \eqref{desc2a} implies $\gamma=1$. Using 
\eqref{scbeta}, \eqref{scgamma} and \eqref{scDelta} we have
$$
\Delta = 1/\sigma = \beta + \gamma = 1 + \beta
$$
Using \eqref{scbeta}
$$
\tau - 2 = \frac{\beta}{1/\sigma} = \frac{\beta}{1+\beta}
$$
which completes the proof.

\subsection{Proof of Theorem 3}\label{sec:pfTh3}

Lemma \ref{betaphi} implies $\beta_\phi=\beta$. \eqref{aesc2a} implies $\gamma_\phi=1$. The $\phi$ versions of \eqref{scbeta}, \eqref{scgamma} and \eqref{scDelta} imply
$$
\Delta=1/\sigma = \beta_\phi + \gamma_\phi = 1 +m \beta
$$
\eqref{aesc3} and \eqref{sigeq} imply 
$$
\sigma=\alpha+2 = 2m+1 - m \tau = 1 -m(\tau-2)
$$
Rearranging gives
$$
\tau - 2 = \frac{1-\sigma}{m} = \frac{(1/\sigma-1)/m}{1/\sigma}
= \frac{\beta}{1+m\beta}
$$
To get the final result we have to assume that the two quantities on the right hand side of \eqref{aesc1} are equal, i.e. $\beta - \gamma_\phi = \beta_\phi - \gamma$,
in order to conclude that
$$
\gamma = 1 + (m-1)\beta
$$

\clearp

\section{Proofs of scaling relations} 

\subsection{Results that follow from \eqref{nearcr}} \label{sec:scth}

To have rigorous result we state explicitly what we assume about $\tau$ and about the scaling function. Throughout we only use the scaling relationship
$$
p(s,t) = s^{1-\tau} f(s|t-t_c|^{1/\sigma})
$$
Since $\phi(s,t) = s^{1-\tau_\phi} g(s|t-t_c|^{1/\sigma})$ the $\phi$ versions of the
scaling relationships follow immediately.

\begin{lemma}
If $2 < \tau < 3$ and $f$ is bounded, and is Lipschitz continuous at 0. 
$$
\beta = (\tau-2)/\sigma
$$
\end{lemma}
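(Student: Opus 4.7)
The plan is to express $\theta(t)$ as a sum over cluster sizes, substitute the scaling form, and recognize the resulting expression as a Riemann sum whose asymptotic behavior in $\delta = t - t_c$ yields the exponent. Since $\theta(t_c)=0$ forces $\sum_s p(s,t_c)=1$, I would, for $t\downarrow t_c$, write
\[
\theta(t) \;=\; \sum_{s=1}^\infty \bigl[p(s,t_c) - p(s,t)\bigr] \;=\; \sum_{s=1}^\infty s^{1-\tau}\bigl[f(0) - f(s\delta^{1/\sigma})\bigr],
\]
using $p(s,t_c)=s^{1-\tau}f(0)$.

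Setting $h=\delta^{1/\sigma}$, the sum can be rewritten as
\[
\theta(t) \;=\; h^{\tau-2} \sum_{s=1}^\infty h\cdot (sh)^{1-\tau}\bigl[f(0)-f(sh)\bigr],
\]
which is a Riemann sum with mesh $h\to 0$ for $C:=\int_0^\infty u^{1-\tau}[f(0)-f(u)]\,du$. If $C$ is finite and the Riemann sum converges to it, then $\theta(t)\sim C\,\delta^{(\tau-2)/\sigma}$, giving $\beta=(\tau-2)/\sigma$.

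The hypotheses on $f$ and $\tau$ are exactly calibrated to make this argument rigorous. Lipschitz continuity at $0$ gives $|f(0)-f(u)|\le Lu$ near $u=0$, so the integrand behaves like $u^{2-\tau}$, integrable at $0$ precisely because $\tau<3$. Boundedness of $f$ gives an $O(u^{1-\tau})$ tail, integrable at infinity because $\tau>2$. The same two bounds furnish a dominating function uniform in $h$, justifying convergence of the Riemann sum to $C$ by dominated convergence. The main obstacle, and the part deserving most care, is the boundary region $s=O(1)$, i.e.\ $u=O(h)$: there the Lipschitz bound must be invoked on a fixed neighborhood of $0$ while the contribution from large $s$ is controlled separately by boundedness of $f$. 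Once the sum is split into these two regimes and each is bounded by an integrable majorant, the passage from the discrete sum to $C$ is standard and the conclusion $\theta(t)\sim C\delta^{(\tau-2)/\sigma}$ follows.
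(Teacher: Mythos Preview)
Your proposal is correct and follows essentially the same route as the paper: write $\theta(t)$ as $\sum_s s^{1-\tau}[f(0)-f(s\delta^{1/\sigma})]$, rescale to pull out the factor $\delta^{(\tau-2)/\sigma}$, and use Lipschitz continuity at $0$ (with $\tau<3$) for integrability near the origin and boundedness of $f$ (with $\tau>2$) for integrability at infinity. The paper simply replaces the sum by an integral at the outset, whereas you frame the passage as convergence of a Riemann sum with a dominated-convergence justification; this is a touch more careful but not a different argument.
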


\begin{proof} Using \eqref{nearcr} and replacing sum by integration.
$$
\theta(t) \approx \int_1^\infty s^{1-\tau} [f(0) - f(s \delta^{1/\sigma})] \, ds
$$
Changing variables $s= x \delta^{-1/\sigma}$, $ds = \delta^{-1/\sigma} dx$
the above
$$
= \delta^{(\tau-2)/\sigma} \int_{\delta^{1/\sigma}}^\infty
x^{1-\tau} [f(0) - f(x)] \, dx
$$
Since $f$ is bounded and $\tau > 2$ the integral over $[1,\infty)$ is finite.
Since $f$ is Lipschitz continuous the integrand is $ \le Cx^{2-\tau}$ near 0. Since $\gamma<3$, the integral over $[0,1]$ is finite, and it follows that
 $\theta(t) \sim C  \delta^{(\tau-2)/\sigma}$.
\end{proof}

The next result establishes \eqref{scgamma} and \eqref{scDelta}

\begin{lemma} If $2< \gamma < 3$ and $\int_1^\infty x^m f(x) \, dx < \infty$ for all $m$ then for all $\rho\ge 1$
$$
\Gamma(r) = (r+2-\tau)/\sigma
$$
It follows that for all integers $k \ge 2$, 
$$
\Delta_k = \Gamma(k)-\Gamma(k-1) = 1/\sigma.
$$
\end{lemma}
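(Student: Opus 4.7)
The plan is to mimic the proof of the preceding lemma, substituting the scaling ansatz into $\chi_r(t)=\sum_s s^r p(s,t)$, approximating the sum by an integral, and performing the change of variables $x = s\delta^{1/\sigma}$ where $\delta = |t-t_c|$. Writing
$$
\chi_r(t) \approx \int_1^\infty s^{r+1-\tau} f(s\delta^{1/\sigma})\,ds,
$$
the substitution $s = x\delta^{-1/\sigma}$, $ds = \delta^{-1/\sigma}dx$ gives
$$
\chi_r(t) \approx \delta^{-(r+2-\tau)/\sigma}\int_{\delta^{1/\sigma}}^\infty x^{r+1-\tau} f(x)\,dx,
$$
so the goal reduces to showing that the remaining integral converges to a positive, finite constant as $\delta \downarrow 0$.

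Next I would verify integrability on both ends. Near the origin, the integrand behaves like $x^{r+1-\tau}$ times the bounded quantity $f(x)$; since $r\ge 1$ and $\tau<3$, we have $r+1-\tau > -1$, so the lower tail is integrable and the limit of the lower endpoint (as $\delta^{1/\sigma}\to 0$) is harmless. Near infinity, the assumption $\int_1^\infty x^m f(x)\,dx < \infty$ for every $m$ handles the upper tail: choose any integer $m \ge r+1-\tau$ and bound $x^{r+1-\tau}f(x) \le x^m f(x)$ for $x\ge 1$. Hence by dominated convergence
$$
\int_{\delta^{1/\sigma}}^\infty x^{r+1-\tau} f(x)\,dx \longrightarrow \int_0^\infty x^{r+1-\tau} f(x)\,dx \in (0,\infty),
$$
and therefore $\chi_r(t)\sim C_r\, \delta^{-(r+2-\tau)/\sigma}$, giving $\Gamma(r) = (r+2-\tau)/\sigma$. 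Specializing to $r=1$ recovers $\gamma = (3-\tau)/\sigma$, consistent with \eqref{scgamma}.

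For the final statement, once $\Gamma(k) = (k+2-\tau)/\sigma$ is established for each integer $k\ge 1$, we simply subtract:
$$
\Delta_k = \Gamma(k) - \Gamma(k-1) = \frac{(k+2-\tau) - (k+1-\tau)}{\sigma} = \frac{1}{\sigma},
$$
matching \eqref{scDelta}.

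The only mild obstacle is the passage from the sum defining $\chi_r$ to the integral approximation, which the paper already uses implicitly in the previous lemma; if one wanted a fully rigorous version one would estimate the error by comparing Riemann sums to the integral and noting that the dominant contribution comes from $s$ of order $\delta^{-1/\sigma}\to\infty$, so the discretization error is of lower order. Beyond that, the argument is essentially a calculation, and the moment hypothesis is tailored precisely to kill the large-$x$ tail for every $r$.
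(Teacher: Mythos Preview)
Your proposal is correct and follows essentially the same route as the paper: write $\chi_r(t)\approx\int_1^\infty s^{r+1-\tau} f(s\delta^{1/\sigma})\,ds$, change variables $s=x\delta^{-1/\sigma}$, and check that the resulting integral over $[\delta^{1/\sigma},\infty)$ converges to a finite positive constant using $\tau<3$, $r\ge 1$ near $0$ and the moment hypothesis near $\infty$. Your write-up is in fact a bit more careful than the paper's (you invoke dominated convergence and comment on the sum-to-integral passage), but the argument is the same.
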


\begin{proof}
\beq
E|\CC_x|^r = \int_1^\infty s^{r+1-\tau} f(s \delta^{1/\sigma}) \, ds
\label{rhomom}
\eeq
Changing variables $s= x \delta^{-1/\sigma}$, $ds = \delta^{-1/\sigma} dx$
the above
$$
= \delta^{(\tau-2-r)/\sigma} \int_{\delta^{1/\sigma}}^\infty
x^{\rho+1-\tau} f(x) \, dx
$$
The assumption $\int_1^\infty x^m f(x) \, dx < \infty$ for all $m$ implies that the integral over $[1,\infty)$ is finite. Since $\tau<3$ and $\rho\ge 1$  the integral over $[0,1]$ is finite
 \end{proof}

\subsection{Proofs of ODE consequences} \label{sec:conODE}

\mn
We will now derive two consequences of the ODE for two-choice rules \eqref{genODE}. Let $\theta(t) = 1 - \sum_{i=1}^\infty p(1,t)$ be the fraction of vertices in giant components. The first result establishes \eqref{desc1}

\begin{lemma} \label{derperc}
Let $\Phi = 1 - \sum_v \phi(v,t)$
and $\langle s \rangle_\phi = \sum_s s \phi(s,t)$
$$
\frac{\partial  \theta(t)}{\partial t} =  2\langle s \rangle_\phi \Phi
$$
\end{lemma}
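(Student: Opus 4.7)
The plan is to sum the two-choice ODE \eqref{2cODE} over all finite cluster sizes $s\ge 1$ and use the mass-conservation identity $\sum_{s\ge 1} p(s,t) = 1 - \theta(t)$. Differentiating this identity and moving the minus sign gives
$$
-\frac{\partial \theta(t)}{\partial t} \;=\; \sum_{s\ge 1}\frac{\partial p(s,t)}{\partial t} \;=\; \sum_{s\ge 1} s\sum_{u+v=s}\phi(u,t)\phi(v,t) \;-\; 2\sum_{s\ge 1} s\,\phi(s,t).
$$

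Next I would interchange the order of summation in the convolution term. Writing the double sum over $u,v\ge 1$ with $s=u+v$,
$$
\sum_{s\ge 1} s\sum_{u+v=s}\phi(u,t)\phi(v,t) \;=\; \sum_{u,v\ge 1}(u+v)\phi(u,t)\phi(v,t) \;=\; 2\langle s\rangle_\phi\Bigl(\sum_{v\ge 1}\phi(v,t)\Bigr) \;=\; 2\langle s\rangle_\phi(1-\Phi),
$$
where the last equality uses the definition $\Phi = 1 - \sum_v \phi(v,t)$. Recognizing the remaining sum as $2\langle s\rangle_\phi$ and combining yields
$$
-\frac{\partial\theta(t)}{\partial t} \;=\; 2\langle s\rangle_\phi(1-\Phi) - 2\langle s\rangle_\phi \;=\; -2\langle s\rangle_\phi\,\Phi,
$$
which is the claimed identity after negating. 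The whole calculation also has a clean probabilistic reading: each edge addition transfers mass to the giant component only when one of the two chosen vertices already lies there (probability $\Phi$) and the other sits in a finite cluster whose expected size is $\langle s\rangle_\phi$, with a factor $2$ counting the two independent choices.

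The only real obstacle is justifying the termwise differentiation and the interchange of summations, which both require $\langle s\rangle_\phi<\infty$. For $t<t_c$ this is part of the standing assumption that $\chi_1$ is finite, and in the critical/supercritical regime the same bound together with dominated convergence will suffice as long as $\Phi$ is interpreted as the mass that the rule assigns to the giant component. I would handle this with a standard truncation argument: sum the ODE only over $s\le N$, take $N\to\infty$, and observe that the tail of $\sum s\phi(s,t)$ vanishes by the finiteness of $\langle s\rangle_\phi$, so the interchange is legitimate.
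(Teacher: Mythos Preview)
Your proof is correct and follows essentially the same route as the paper: differentiate $\theta(t)=1-\sum_s p(s,t)$, substitute the two-choice ODE, rewrite the convolution sum as $\sum_{u,v}(u+v)\phi(u)\phi(v)=2\langle s\rangle_\phi\sum_v\phi(v)$, and combine with the loss term $2\langle s\rangle_\phi$ to extract the factor $\Phi$. The paper does not discuss the interchange of summation or termwise differentiation, so your truncation remark and probabilistic interpretation are additions rather than departures.
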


\begin{proof} $\theta(t) = 1 - \sum_s P(s,t)$ so we have 
\begin{align*}
\frac{\partial \theta}{\partial t} & = - \sum_s \frac{\partial P}{\partial t} \
 = -\sum_s  s \sum_{u+v=s} \phi(u,t) \phi(v,t) + \sum_s 2s \phi(s,t) \\
& = -\sum_u  \sum_v (u+v) \phi(u,t) \phi(v,t) + \sum_s 2s \phi(s,t)  \\
& = -2 \sum_u u \phi(u,t) \sum_v \phi(v,t) + \sum_s 2s \phi(s,t)  \\
& = 2\sum_s s \phi(s,t) \left[ 1- \sum_v \phi(v,t) \right] =  2\langle s \rangle_\phi \Phi
\end{align*}
which proves the desired result. \end{proof}

While the proof is fresh on the readers mind we will prove the result for
the alternate edge rule.

\begin{lemma} \label{AEd1}
$$
\frac{\partial  \theta(t)}{\partial t} =  
\langle s \rangle_p \phi + \langle s \rangle_\phi \theta
$$
\end{lemma}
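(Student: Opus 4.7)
The plan is to mimic the computation in Lemma \ref{derperc}, but starting from the ODE governing $p(s,t)$ in the alternate edge rule rather than the symmetric two-choice ODE \eqref{2cODE}. The key difference is that one endpoint of the new edge is a uniformly random vertex (so its cluster size has distribution $p(\cdot,t)$) while the other is chosen by the min rule with distribution $\phi(\cdot,t)$. Thus I would first derive
\begin{equation*}
\frac{\partial p(s,t)}{\partial t} \;=\; s \sum_{u+v=s} p(u,t)\,\phi(v,t) \;-\; s\,p(s,t) \;-\; s\,\phi(s,t),
\end{equation*}
by the same bookkeeping used to motivate \eqref{ERode} and \eqref{2cODE}: the $s$ out front counts the $s$ newly-merged vertices in the created size-$s$ cluster, while the two loss terms account separately for the case when the random vertex lies in a size-$s$ cluster and the case when the $\phi$-chosen vertex does.

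Next, using $\theta(t) = 1 - \sum_s p(s,t)$ and summing the ODE over $s \ge 1$, the gain term becomes
\begin{equation*}
\sum_s s \sum_{u+v=s} p(u,t)\phi(v,t) \;=\; \sum_{u,v} (u+v)\,p(u,t)\phi(v,t) \;=\; \langle s\rangle_p \sum_v \phi(v,t) + \langle s\rangle_\phi \sum_u p(u,t),
\end{equation*}
while the loss terms contribute $\langle s \rangle_p + \langle s\rangle_\phi$. Combining these with the minus sign from $\partial \theta/\partial t = -\sum_s \partial p/\partial t$ gives
\begin{equation*}
\frac{\partial \theta}{\partial t} \;=\; \langle s\rangle_p\Bigl(1 - \sum_v \phi(v,t)\Bigr) + \langle s\rangle_\phi\Bigl(1 - \sum_u p(u,t)\Bigr) \;=\; \langle s\rangle_p \Phi + \langle s\rangle_\phi \theta,
\end{equation*}
which is the claim (reading the ``$\phi$'' on the right-hand side of the stated lemma as $\Phi$, matching the formula preceding the alternate-edge-rule theorem).

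There is essentially no obstacle: once the correct asymmetric ODE is written down, the manipulation is the same splitting-of-a-double-sum trick as in the proof of Lemma \ref{derperc}. The only thing to watch is being careful not to double count: the random vertex and the $\phi$-chosen vertex play distinguishable roles, so the gain sum runs over ordered pairs $(u,v)$ with $u+v=s$ and does not get an extra factor of $1/2$.
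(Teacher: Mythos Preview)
Your proof is correct and follows essentially the same approach as the paper: write down the asymmetric alternate-edge ODE for $p(s,t)$, sum over $s$, and split the double sum $\sum_{u,v}(u+v)p(u,t)\phi(v,t)$ into its two pieces. You are also right that the ``$\phi$'' in the lemma statement is a typo for $\Phi$, matching the displayed ODE before Theorem~3.
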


\begin{proof} Computing as above using the alterantive edge ODE
\begin{align*}
\frac{\partial \theta}{\partial t}
& = -\sum_u  \sum_v (u+v) p(u,t) \phi(v,t) 
+ \sum_s  sp(s,t) + \sum_s s \phi(s,t)  \\
& = -\sum_u u p(u,t) \sum_v \phi(v,t) + \sum_u u p(u,t)  \\
& - \sum_v v \phi(v,t) \sum_u \phi(u,t) + \sum_v v \psi(u,t) 
\end{align*}
which proves the desired result. \end{proof}

\mn
We now prove the second result

\begin{lemma} \label{dermean}
$\partial\langle s \rangle_p/\partial t
= 2\langle s \rangle ^2_\phi - 2\Phi \langle s^2\rangle_\phi$
\end{lemma}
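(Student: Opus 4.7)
The plan is to differentiate $\langle s\rangle_p = \sum_s s\,p(s,t)$ term by term, substitute the two-choice ODE \eqref{2cODE} for $\partial p(s,t)/\partial t$, and rearrange the resulting double sum until everything is expressed as moments of the choice distribution $\phi$.

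First I would write
$$
\frac{\partial \langle s\rangle_p}{\partial t}
= \sum_s s \cdot \frac{\partial p(s,t)}{\partial t}
= \sum_s s^2 \sum_{u+v=s} \phi(u,t)\phi(v,t) \;-\; 2\sum_s s^2 \phi(s,t),
$$
pulling the extra factor of $s$ inside each of the two terms of \eqref{2cODE}. The second term is already $-2\langle s^2\rangle_\phi$. For the first term I would reindex the inner sum by writing $s=u+v$, which gives an unconstrained double sum $\sum_u\sum_v (u+v)^2 \phi(u,t)\phi(v,t)$.

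Next I would expand $(u+v)^2 = u^2+2uv+v^2$. By symmetry in $u,v$ the $u^2$ and $v^2$ contributions are equal, producing $2\langle s^2\rangle_\phi \sum_v \phi(v,t)$, while the cross term gives $2(\sum_u u\phi(u,t))^2 = 2\langle s\rangle_\phi^2$. Using the definition $\Phi = 1-\sum_v\phi(v,t)$ to replace $\sum_v\phi(v,t) = 1-\Phi$, the first term becomes $2(1-\Phi)\langle s^2\rangle_\phi + 2\langle s\rangle_\phi^2$. Combining with the $-2\langle s^2\rangle_\phi$ from the second term, the $\langle s^2\rangle_\phi$ contributions collapse to $-2\Phi\langle s^2\rangle_\phi$, yielding the claimed identity
$$
\frac{\partial \langle s\rangle_p}{\partial t} = 2\langle s\rangle_\phi^2 - 2\Phi\langle s^2\rangle_\phi.
$$

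This mirrors the computation in Lemma \ref{derperc} almost verbatim, the only new ingredient being the expansion of $(u+v)^2$ instead of $(u+v)$. The only mild subtlety—and the one point I would be careful about—is the interchange of summation with differentiation, and the convergence of $\sum_s s^2 \phi(s,t)$ and of the double sum; these are unproblematic below $t_c$ where $\chi_2$ is finite, but the formula is meant to be used as $t\uparrow t_c$ and $t\downarrow t_c$ to extract the critical exponents, so I would just note that it is understood as an identity for $t\ne t_c$.
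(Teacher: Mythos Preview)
Your proof is correct and follows essentially the same approach as the paper: multiply the ODE by $s$, sum, reindex the convolution as an unconstrained double sum, expand $(u+v)^2$, and use $\sum_v\phi(v,t)=1-\Phi$ to collapse the $\langle s^2\rangle_\phi$ terms. Your added remark about the interchange of sum and derivative and convergence near $t_c$ is a sensible caveat that the paper itself does not make explicit.
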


\begin{proof} Multiplying by $s$ and then summing 
\begin{align*}
\frac{\partial\langle s \rangle_p}{\partial t} & = 
\sum_s  s^2 \sum_{u+v=s} \phi(u,t) \phi(v,t) - \sum_s 2s^2 \phi(s,t) \\
 & = \sum_u \sum_v (u+v)^2 \phi(u,t) \phi(v,t) - \sum_s 2s^2 \phi(s,t) \\
 & = 2\sum_u u \phi(u,t) \sum_v v \phi(v,t) + 2
\sum_u u^2 \phi(u,t) \sum_v  \phi(v,t) - \sum_s 2s^2 \phi(s,t)\\
& = 2\langle s \rangle ^2_\phi - 2\Phi \langle s^2\rangle_\phi
\end{align*}
which gives the desired result. 
\end{proof}

Again our next step is to extend to the alternative edge rule.

\begin{lemma} \label{AEd2}
$\partial\langle s \rangle_p/\partial t
= 2\langle s \rangle_p\langle s \rangle_\phi 
- \Phi \langle s^2\rangle_p- \theta \langle s^2\rangle_\phi$
\end{lemma}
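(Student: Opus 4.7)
The plan is to mimic the proof of Lemma \ref{dermean}, substituting in the ODE that governs $p(s,t)$ under the alternative edge rule. Under this rule the first vertex is chosen with the uniform (size-biased) distribution $p$ and the second according to the min rule with distribution $\phi$, so generalizing \eqref{2cODE} one obtains
$$\frac{\partial p(s,t)}{\partial t} = s \sum_{u+v=s} p(u,t)\phi(v,t) - s\, p(s,t) - s\, \phi(s,t).$$
The gain term counts the $s$ vertices newly placed into a cluster of size $s$ when a cluster of size $u$ (chosen via $p$) merges with one of size $v$ (chosen via $\phi$); the two loss terms count the $s$ vertices removed from a size-$s$ cluster when it is picked on the first or second draw respectively. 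This is the same ODE already used implicitly in the proof of Lemma \ref{AEd1}.

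Next I would multiply this ODE by $s$ and sum over $s \ge 1$. Reindexing the convolution via $(u,v)$ with $s=u+v$ and expanding $s^2=(u+v)^2 = u^2 + 2uv + v^2$, the gain sum splits into three pieces that each factor as a product of two one-variable sums. These one-variable sums are precisely $\sum_u p(u,t) = 1-\theta$, $\sum_v \phi(v,t) = 1-\Phi$, $\langle s\rangle_p$, $\langle s\rangle_\phi$, $\langle s^2\rangle_p$, and $\langle s^2\rangle_\phi$.

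Collecting the three pieces one obtains $(1-\Phi)\langle s^2\rangle_p$ from the $u^2$ term, $(1-\theta)\langle s^2\rangle_\phi$ from the $v^2$ term, and $2\langle s\rangle_p\langle s\rangle_\phi$ from the cross term. The loss contributions $-\langle s^2\rangle_p - \langle s^2\rangle_\phi$ cancel the $1$'s in $(1-\Phi)$ and $(1-\theta)$ respectively, leaving exactly
$$\frac{\partial \langle s\rangle_p}{\partial t} = 2\langle s\rangle_p\langle s\rangle_\phi - \Phi\langle s^2\rangle_p - \theta\langle s^2\rangle_\phi,$$
which is the claim. The only real subtlety, and the one point where the calculation differs from Lemma \ref{dermean}, is keeping track of which asymmetric factor picks up $\theta$ (from $p$) and which picks up $\Phi$ (from $\phi$); in Lemma \ref{dermean} the two sampling distributions agreed so this bookkeeping collapsed. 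Beyond this the argument is a routine expansion and the main obstacle is purely notational.
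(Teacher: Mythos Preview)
Your proposal is correct and follows essentially the same approach as the paper: multiply the alternative-edge ODE by $s$, sum, expand $(u+v)^2$, factor the resulting double sums, and use $\sum_u p(u)=1-\theta$, $\sum_v \phi(v)=1-\Phi$ to cancel against the loss terms. If anything, you have spelled out the bookkeeping (which square term acquires $\Phi$ versus $\theta$) more carefully than the paper does.
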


\begin{proof} Multiplying by $s$ and then summing 
\begin{align*}
\frac{\partial\langle s \rangle_p}{\partial t} 
 & = \sum_u \sum_v (u+v)^2 p(u,t) \phi(v,t)
 - \sum_s s^2 p(s,t)  - \sum_s s^2 \phi(s,t) \\
 & = 2\sum_u u p(u,t) \sum_v v \phi(v,t) \\
& + \sum_u u^2 p(u,t) \sum_v  \phi(v,t) - \sum_s s^2 p(s,t)\\
& + \sum_v v^2 \phi(v,t) \sum_u  p(v,t) - \sum_s s^2 \phi(s,t)
\end{align*}
which gives the desired result. 
\end{proof}

\subsection{A computation from Appendix E of \cite{daCosta-scaling}}
\label{sec:appE}

Their argument is for the 2 choice min rule. To generalize we suppose
$$
\phi(s,t) = s^\alpha  g(s\delta^{1/\sigma}) 
$$
It is convenient to let $f(x) = x^{\tau-1}\tilde f(x)$,  and $g(x) = x^{-\alpha}\tilde g(x)$ in order to rewrite the scaling formulas as
\begin{align}
p(s,t) & = s^{1-\tau} f(s\delta^{1/\sigma}) 
= \delta^{(\tau-1)/\sigma} \tilde f(s\delta^{1/\sigma})
\label{Psc2} \\
\phi(s,t) & = s^{\alpha} g(s\delta^{1/\sigma}) 
= \delta^{-\alpha/\sigma} \tilde g(s\delta^{1/\sigma})
\label{Qsc2}
\end{align} 

To eliminate the contribution from small values of $u$ and $v$ 
we substitute $\phi(u)=\phi(u)-\phi(s) + \phi(s)$ 
and $\phi(s-u)=\phi(s-u)-\phi(s) + \phi(s)$ in \eqref{genODE} to get
\begin{align*}
&\frac{\partial p(s)}{\partial t}  = s\sum_{u=1}^{s-1} \phi(u)\phi(s-u) - 2s \phi(s) \\
 &=s\sum_{u=1} ^{s-1}[\phi(u)-\phi(s)][\phi(s-u) - \phi(s)] \\
&+ s \sum_{u=1}^{s-1} \phi(s)\phi(s-u) + s \sum_{u=1}^{s-1} \phi(u)\phi(s) 
 - s \sum_{u=1}^{s-1} \phi(s)^2 - 2 s \phi(s)
\end{align*}
where we have dropped the dependence on $t$ to simplify the formulas.

Rearranging gives
\begin{align*}
\frac{\partial p(s)}{\partial t} 
& =s \sum_{u=1}^{s-1}[\phi(u) -\phi(s)][\phi(s-u)-\phi(s)] \\
 &  + 2s\phi(s)\left[ 1 - \sum_{u=s}^\infty \phi(u) \right] 
 -s(s-1) \phi(s)^2 -2s \phi(s)
\end{align*}
Canceling the two $2s\phi(s)$ and writing in integral form
\begin{align}
\frac{\partial p(s)}{\partial t} & = -s^2 \phi^2(s) 
- 2s\phi(s)\int_{s}^\infty \phi(u) \, du
\nonumber\\
& + s \int_{0}^{s}[\phi(u) -\phi(s)] [\phi(s-u)-\phi(s)] \, du
\label{dCDE2}
\end{align}
Taking $\delta=t_c-t$ and using $s \delta^{-1/\sigma} = x$ with
$p(s,t)= \delta^{(\tau-1)/\sigma} \tilde f(s\delta^{1/\sigma})$
\begin{align}
\frac{\partial  p}{\partial t} 
& = - \frac{ \tau-1}{\sigma} \delta^{-1 +(\tau-1)/\sigma}
 \tilde f(s\delta^{1/\sigma})
 -  \delta^{(\tau-1)/\sigma} \tilde f'(s\delta^{1/\sigma}) 
\frac{s}{\sigma} \delta^{-1+1/\sigma} 
\nonumber\\
& = \left[ - \frac{ \tau-1}{\sigma} \tilde f(x)  - \frac{x}{\sigma}  \tilde f'(x)  \right]
\cdot  \delta^{-1 +(\tau-1)/\sigma} 
\label{scLHS}
\end{align}
Using $\phi(r)= \delta^{-\alpha/\sigma} \tilde g(r\delta^{1/\sigma})$
the right-hand side of \eqref{dCDE2} becomes
\begin{align*}
&= -s^2  \delta^{-2\alpha/\sigma} \tilde g(s\delta^{1/\sigma})^2
-2s  \delta^{-2\alpha/\sigma} \tilde g(s\delta^{1/\sigma})  \int_s^\infty du\,  \tilde g(u\delta^{1/\sigma}) \\
& + s  \delta^{-2\alpha/\sigma}  \int_{0}^{s} du
 \,[ \tilde g(u\delta^{1/\sigma})  - \tilde g(s\delta^{1/\sigma}) ]
\cdot[ \tilde g((s-u)\delta^{1/\sigma})  - \tilde g(s\delta^{1/\sigma}) ]
\end{align*}
using $s=x\delta^{-1/\sigma}$, $u=y\delta^{-1/\sigma}$, and 
$du = dy\, \delta^{-1/\sigma}$
\begin{align}
& = -x^2  \delta^{(-2\alpha - 2)/\sigma} \tilde g(x)^2
-2  x \delta^{(-2\alpha-2)/\sigma} \int_x^\infty dy\,  \tilde g(y)
\nonumber \\
& + x  \delta^{(-2\alpha-2)/\sigma} \int_{0}^{x} dy
 \,[ \tilde g(y)  - \tilde g(x) ]
,[ \tilde g(x-y)  - \tilde g(x) ]
\label{scRHS}
\end{align}
In order for \eqref{dCDE2} to hold the powers of $\delta$ must be the same, i.e.
$$
- 1 + \frac{\tau-1}{\sigma} = \frac{-2\alpha-2}{\sigma}
$$
Rearranging gives 
\beq
\sigma = (\tau-1)+  2\alpha + 2
\label{ODEsc3}
\eeq

\subsection{Proof for adjacent edge rule}

To eliminate the contribution from small values of $u$ and $v$ in \eqref{genODE}
we again substitute $p(u)=p(u)-p(s) + p(s)$ 
and $\phi(s-u)=\phi(s-u)-\phi(s) + \phi(s)$ to get
\begin{align*}
&\frac{\partial p(s)}{\partial t}  = s\sum_{u=1}^{s-1} p(u)\phi(s-u) 
- s p(s) - s \phi(s) \\
 &=s\sum_{u=1} ^{s-1}[p(u)-p(s)][\phi(s-u) - \phi(s)] \\
&+ s \sum_{u=1}^{s-1} p(s)\phi(s-u) + s \sum_{u=1}^{s-1} p(u)\phi(s) 
 - s \sum_{u=1}^{s-1}p(s) \phi(s) - s p(s) - s \phi(s)
\end{align*}
Rearranging gives
\begin{align*}
\frac{\partial p(s)}{\partial t} 
 &=s\sum_{u=1} ^{s-1}[p(u)-p(s)][\phi(s-u) - \phi(s)] \\
&+ sp(s)  \sum_{v=1}^{s-1} \phi(v) + s\phi(s)  \sum_{u=1}^{s-1} p(u)
 - s(s-1)p(s) \phi(s) - s p(s) - s \phi(s)
\end{align*}
Canceling and writing in integral form
\begin{align}
\frac{\partial p(s)}{\partial t} =
& - s^2p(s) \phi(s)
- sp(s)  \int_s^\infty  \phi(u) \,du - s\phi(s)  \int_s^\infty  p(u) \, du
\nonumber \\
 &+s\int_0^s [p(u)-p(s)][\phi(s-u) - \phi(s)] \,du 
\label{AEeq}
\end{align}
The formula for the left hand side given in \eqref{scLHS} remains the same.
 Using $p(r)= \delta^{(\tau-1)/\sigma} \tilde f(r\delta^{1/\sigma})$
$\phi(r)= \delta^{-\alpha/\sigma} \tilde g(r\delta^{1/\sigma})$ 
the right-hand side is
\begin{align*}
&= -s^2  \delta^{(\tau-1-\alpha)/\sigma} \tilde f(s\delta^{1/\sigma})
\tilde g(s\delta^{1/\sigma})\\
& -s  \delta^{(\tau-1-\alpha)/\sigma} \tilde f(s\delta^{1/\sigma})  \int_s^\infty du\,  
\tilde g(u\delta^{1/\sigma}) 
-s  \delta^{(\tau-1-\alpha)/\sigma} \tilde g(s\delta^{1/\sigma})  \int_s^\infty du\,  
\tilde f(u\delta^{1/\sigma}) \\
& + s  \delta^{(\tau-1-\alpha)/\sigma}  \int_{0}^{s} du
 \,[ \tilde f(u\delta^{1/\sigma})  - \tilde f(s\delta^{1/\sigma}) ]
\cdot[ \tilde g((s-u)\delta^{1/\sigma})  - \tilde g(s\delta^{1/\sigma}) ]
\end{align*}
Using $s=x\delta^{-1/\sigma}$, $u=y\delta^{-1/\sigma}$
$du = dy\, \delta^{-1/\sigma}$
\begin{align*}
&= -x^2  \delta^{(\tau-1-\alpha-2)/\sigma} \tilde f(x)\tilde g(x)\\
& -x  \delta^{(\tau-1-\alpha-2)/\sigma} \tilde f(x)  \int_x^\infty dy\,  \tilde g(y) 
-x  \delta^{(\tau-1-\alpha-2)/\sigma} \tilde g(x)  \int_s^\infty dy\,  
\tilde f(y) \\
& + x  \delta^{(\tau-1-\alpha-2)/\sigma}  \int_{0}^{x} dy
 \,[ \tilde f(y)  - \tilde f(x) ]
\cdot[ \tilde g(x-y)  - \tilde g(x) ]
\end{align*}

In order for \eqref{AEeq} to hold the powers of $\delta$ must be the same, i.e.
$$
- 1 + \frac{\tau-1}{\sigma} = \frac{\tau-1-\alpha-2}{\sigma}
$$
Rearranging gives 
\beq
\sigma = \alpha + 2
\label{AEsc3}
\eeq

\clearp

\section{\ER model} \label{sec:ER}

\subsection{Survival probability}

Let $x$ be a randomly chosen vertex and let $Z_m$ be the number of vertices at distance $m$ from $x$. When $m =o(\log n)$ the cluster containing $x$ is whp a tree and $Z_m$ is a branching process in which each individual in generation $m$ has a Poisson($\mu$) number of offespring.

Consider a branching process with offspring distribution $r_k$ with mean $\mu>1$ and finite second moment. Let $\phi(z) = \sum_{k=0}^\infty r_k z^z$ be the generating function. If $r_K$ is Poisson($\mu$) then
\beq
\phi(z) = \sum_{k=0}^\infty e^{-\mu} \frac{\mu^k|}{k!}z^k 
\exp(-\mu(1-z))
\label{Poissongf}
\eeq

Let $\rho$ be probability the system dies out. Breaking things down according to the number of children in the first generation
$$
\rho = \sum_{k=0}^\infty r_k \rho^k = \phi(\rho)
$$
$\rho(1)=1$ is a trivial solution. $\rho$ is the unique solution of $\phi(\rho)=\rho$ in $[0,1)$.
$$
\phi(1)=1 \qquad \phi'(1) = \sum_{k=0}^\infty k r_k = \mu \qquad
\phi''(1) = \sum_{k=0}^\infty k(k-1) r_k = \mu_2
$$
If $\mu$ is close to 1 then $\rho$ will be close to 1. Ignoring a few details, if $x$ is close to 1 expanding $\phi$ in power series around 1 gives
$$
\phi(1-x) = 1 - \mu x + \mu_2 x^2/2
$$
so for a fixed point we want
$$
(\mu-1)x =  \mu_2 x^2/2
$$
or $x = 2(\mu-1)/\mu_2$. If we let $\theta(\mu) = P( |{\cal C}_x|=\infty)$ which is the same as the fraction of vertices in the giant component
\beq
\theta(\mu) \sim \frac{2}{\mu_2}(\mu-1)
\label{ERbeta}
\eeq
so the critical exponent $\beta =1$.

\subsection{Mean cluster size.}

Let ${\cal C}_x$ be the cluster containing $x$. If $\mu < 1$
\beq
E|{\cal C}_x| = 1 + \mu + \mu^2 + \cdots = \frac{1}{1-\mu}.
\label{ERgamma}
\eeq
so $\gamma=1$. In the supercritical regime we consider
$$
E(|{\cal C}_x|; |{\cal C}_x|<\infty)
$$
The cluster size is the same as the total progeny in a supercritical branching process conditioned to die out.

\begin{theorem}
\label{condtodie}
A supercritical branching process conditioned to become extinct
is a subcritical branching process. If the original offspring
distribution is Poisson($\mu$) with $\mu>1$ then the
conditioned one is Poisson($\mu\rho$) where $\rho$ is
the extinction probability.
\end{theorem}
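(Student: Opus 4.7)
The plan is to compute the offspring distribution of the conditioned process directly via Bayes' rule, using the branching property to evaluate the conditional probability of extinction given the first generation.

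Fix a vertex and condition on it producing $k$ children. By the branching property, the $k$ subtrees started by these children are i.i.d.\ branching processes with the original offspring distribution $r_k$, so the probability that all $k$ subtrees go extinct is $\rho^k$. Therefore by Bayes' rule the conditioned offspring distribution is
\[
\tilde r_k = P(\text{offspring}=k \mid \text{extinction}) = \frac{r_k\, \rho^k}{\rho}.
\]
The fixed-point equation $\sum_k r_k \rho^k = \phi(\rho)=\rho$ shows that this is a probability distribution. The mean offspring number of the conditioned process is
\[
\tilde\mu = \sum_{k\ge 0} k\tilde r_k = \frac{1}{\rho}\sum_{k\ge 0} k r_k \rho^k = \phi'(\rho).
\]
Since $\phi$ is strictly convex, $\phi(1)=1$, $\phi(\rho)=\rho$, and $\phi'(1)=\mu>1$, the fixed point $\rho\in(0,1)$ must satisfy $\phi'(\rho)<1$, so the conditioned process is subcritical. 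To promote this from ``the one-step distribution is $\tilde r$'' to ``the whole conditioned tree is a branching process with law $\tilde r$,'' I would argue that once we condition on the first generation having $k$ children \emph{and} on global extinction, the $k$ subtrees remain conditionally independent (since extinction of the whole tree factors over the subtrees), and each is distributed as a branching process with law $r_k$ conditioned on extinction. The claim then follows by induction on generations.

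For the Poisson specialization, substitute $r_k = e^{-\mu}\mu^k/k!$ into the formula for $\tilde r_k$ and use the identity $\rho = \phi(\rho) = e^{-\mu(1-\rho)}$, which rearranges to $e^{-\mu} = \rho\, e^{-\mu\rho}$. Then
\[
\tilde r_k = \frac{1}{\rho} \cdot e^{-\mu} \cdot \frac{(\mu\rho)^k}{k!} = e^{-\mu\rho}\, \frac{(\mu\rho)^k}{k!},
\]
which is Poisson($\mu\rho$). As a consistency check, $\tilde\mu = \phi'(\rho) = \mu e^{-\mu(1-\rho)} = \mu\rho < 1$, confirming subcriticality.

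The main technical point — really the only subtle one — is justifying that the conditioned process is itself a Galton–Watson branching process, i.e., that after conditioning on the global event $\{\text{extinction}\}$ the offspring counts at distinct vertices remain independent. Everything else is an algebraic manipulation of generating functions. I would handle this either by the inductive/tree-factorization argument sketched above, or more cleanly by observing that $\{\text{extinction}\}$ is a tail event that factors as the intersection of the extinction events of the subtrees rooted at first-generation children, so the conditional law of the family tree is a product over those subtrees.
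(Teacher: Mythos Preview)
Your proof is correct and arrives at the same tilted offspring distribution $\tilde r_k = r_k\rho^k/\rho$ as the paper, via essentially the same mechanism: the key identity $P_x(\text{extinction})=\rho^x$ and the factorization of extinction over independent family lines. The paper frames this as conditioning the Markov chain $(Z_t)$ on $T_0<\infty$ and computing the conditioned transition probability $\bar p(x,y)=p(x,y)\rho^y/\rho^x$, then checking that $\bar p(x,\cdot)$ is the $x$-fold convolution of $\bar p(1,\cdot)$; you instead condition the root's offspring count directly and invoke the tree factorization of the extinction event. These are two phrasings of the same argument. One point in your favor: you explicitly verify subcriticality via $\tilde\mu=\phi'(\rho)<1$ from convexity, which the paper's proof asserts in the statement but does not actually check. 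Your Poisson computation using $e^{-\mu}=\rho e^{-\mu\rho}$ is equivalent to the paper's generating-function manipulation $\rho^{-1}\phi(z\rho)=\exp(\mu\rho(z-1))$.
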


\begin{proof}
Let $T_0 = \inf\{ t : Z_t = 0 \}$
and consider $\bar Z_t = (Z_t | T_0 < \infty)$. 
To check the Markov property for $\bar Z_t$ note that the
Markov property for $Z_t$ implies:
$$
P( Z_{t+1} = z_{t+1}, T_0 < \infty |Z_t = z_t, \ldots Z_0 = z_0 )
= P( Z_{t+1} = z_{t+1}, T_0 < \infty |Z_t = z_t)
$$
To compute the transition probability for $\bar Z_t$, observe that if $\rho$ is the extinction
probability then $P_x(T_0 < \infty) = \rho^x$. Let $p(x,y)$ be the transition
probability for $Z_t$. Note that the Markov property implies
$$
\bar p(x,y) = \frac{ P_x ( Z_1 = y, T_0 < \infty) } { P_x (T_0 < \infty )}
= \frac{ P_x ( Z_1 = y ) P_y( T_0 < \infty) } { P_x (T_0 < \infty )}
= \frac{ p(x,y) \rho^y } {\rho^x}
$$
Taking $x=1$ and computing the generating function
\beq
\sum_{y=0}^\infty \bar p(1,y) z^y = \rho^{-1} \sum_{y=0}^\infty p(1,y) (z\rho)^y
= \rho^{-1} \phi(z\rho)
\label{condgf}
\eeq
where $p_y = p(1,y)$ is the offspring distribution. 

$\bar p_y = \bar p(1,y)$ is the distribution of the size of the family of an individual,
conditioned on the branching process dying out. If we start with $x$ individuals
then in $Z_n$ each gives rise to an independent family. In $\bar Z_n$ each
family must die out, so $\bar Z_n$ is a branching process with offspring
distribution $\bar p(1,y)$. To prove this formally observe that
$$
p(x,y)  = \sum_{j_1, \ldots j_x \ge 0, j_1 + \cdots + j_x = y} p_{j_1} \cdots p_{j_x} 
$$
Writing $\sum_*$ as shorthand for the sum in the last display
$$
\frac{p(x,y) \rho^y}{\rho^x} 
= \sum_* \frac{p_{j_1} \rho^{j_1}}{\rho} \cdots \frac{ p_{j_x} \rho^{j_x}}{\rho} 
= \sum_* \bar p_{j_1} \cdots \bar p_{j_x}
$$
In the case of the Poisson($\mu$) distribution $\phi(z) = \exp(\mu(z-1))$ so if $\lambda>1$, so using \eqref{condgf}
$$
\frac{ \phi(z\rho) } {\rho } 
= \frac{\exp(\mu(z\rho-1))} { \exp(\mu(\rho-1)) }
= \exp( \mu\rho (z-1) )
$$
which completes the proof that the conditioned process is a Poisson($\mu\rho$) branching process
\end{proof}

Using the result for the subcritical case in \eqref{ERgamma}
$$
E(|{\cal C}_x| \, | \, |{\cal C}_x|<\infty) = \frac{1}{1-\mu\rho}
$$
Since $\mu_2=\mu^2$ for Poisson, \eqref{ERbeta} shows that if $\mu$ is close to 1
$$
\rho \approx 1 -\frac{2(\mu-1)}{\mu^2}
$$
so we have
$$
1 - \mu\rho = 1 - \mu + \frac{2(\mu-1)}{\mu} 
= \frac{2(\mu-1)-\mu(\mu-1)}{\mu}
$$
From this we see that
$$
E(|{\cal C}_x|\,  |\, |{\cal C}_x|<\infty) =
\frac{1}{1-\mu\rho} 
\sim \frac{1}{(1-\mu)}
$$
thus the asymptotic behavior of the mean cluster size as $\mu\downarrow 1$ is exactly the same as  as $\mu\uparrow 1$

\subsection{Higher moments}

Although the connection with clusters in Erd\"os-R\'enyi random graps and branching processes is intuitive, it is more convenient technically to expose the cluster one site at a time to obtain something that can be approximated by a random walk. Let $\eta_{x,y}=\eta_{y,x}=1$ if there is an edge between $x$ and $y$.
Let  $R_t$ be the set of removed sites, $U_t$ be the unexplored sites and $A_t$ is the set of active sites. Initially $R_0 = \emptyset$, $U_0=\{2, 3, \ldots, n\}$, and $A_0=\{1\}$.  If $A_t\neq\emptyset$, pick $i_t$ at random from $A_t$  let 
\begin{align}
R_{t+1} & = R_t \cup \{i_t\} \cr
A_{t+1} & = A_t - \{i_t\} \cup \{ y \in U_t : \eta_{i_t,y} = 1 \} \cr
U_{t+1} & = U_t - \{ y \in U_t : \eta_{i_t,y} = 1 \} 
\label{RAUdef}
\end{align}

At time $\tau = \inf\{ t : A_t=\emptyset\}$ we have found all the sites in the cluster and the process stops. $|R_t|=t$ for all $t \le \tau$, so the cluster size is $\tau$. If $|A_n|>0$ and the number of rermoved sites is small
$$
S_{n+1} \approx S_n - 1 + \hbox{Poisson}(\mu)
$$
To have the process defined for all time let $\xi_1, \xi_2, \ldots$ be i.i.d.~$-1 + \hbox{Poisson}(\mu)$ and $S_{n+1} = S_n + \xi_{n+1}$.

\mn
We begin by computing the moment generating function of $\xi_i$
\beq
\psi(\theta)  =  e^{-\theta}  \sum_{m=0}^\infty e^{-\mu} \frac{\mu^m}{m!}
 = \exp( - \theta + \mu (e^{\theta}-1) ) 
\label{mgfPoiss}
\eeq
$\exp(\theta S_t)/\phi(\theta)^t$ is a nonnegative martingale, so using the optional stopping theorem for the nonnegative supermartingale $M_t = \exp(\theta S_t)/\psi(\theta)^t$, see e.g., (7.6) in Chapter 4 of Durrett (2004)
\beq
M_0= e^{\theta} \ge E(\psi(\theta)^{-\tau})
\label{opstop}
\eeq

$\psi'(0) = E\xi_i = 1-\mu$ so if $\mu<1$ then $\psi(\theta)<1$ when $\theta>0$ is small.  To optimize we note that the derivative 
$$
\frac{d}{d\theta} (-\theta + \mu (e^{\theta}-1)) = -1 + \mu e^{\theta} =0
$$ 
when $\theta_1=-\log\mu$. At this point $e^{\theta_1}=1/\mu$ and 
$$
\psi(\theta_1) = \exp( \log(\lambda) + 1 - \lambda ) \equiv e^{-\alpha} < 1
$$
Since $\psi(\theta_1^{-1}=e^\alpha >1$, using Chebyshev's inequality with \eqref{opstop}
$$
e^{\alpha m} P( \tau \ge m )  = \psi(\theta_1)^{-m}  P( \tau \ge m ) 
\le E(\psi(\theta_1)^{-\tau}) \le e^{\theta_1}
$$
One particle dies on each time step so $1+\xi_1 + \cdots + \xi_{\tau} = \tau$ and we have
\beq
P\left( \max_{0 \le n \le \tau} S_n \ge m \right)  \le e^{-m \alpha} /\mu
\label{rwdom}
\eeq

There are several other martingale associated with a random walk. Perhaps the simplest is $S_n - n(\mu-1)$. Using the domination in \eqref{rwdom} we can
conclude that
$$
1 = S_0 = E(S_\tau - (\mu-1)\tau) = (1-\mu) E\tau
$$
so the expected cluster size is $E\tau=1/(1-\mu)$.

If $T_n$ is a random walk in which steps have mean 0 and variance $\sigma^2$
then $T_n-n \sigma^2$ is a martingale. Applying this result to $T_n = S_n - n(\mu-1)$ and recalling that $-1+ \hbox{Poisson}\mu$) has variance $\mu$ we see that
$$
(S_n - n(\mu-1))^2 - \mu n
$$
is a martingale. Using the domination we can stop at time $\tau$ and conclude
\begin{align*}
1 & = E(S_\tau - \tau(\mu-1))^2 - \mu E\tau \\
& = (\mu-1)^2 E\tau^2 - \frac{\mu}{1-\mu}
\end{align*}
Rearranging we have
\beq
E\tau^2 = \frac{1}{(1-\mu)^3}
\label{ER3rdmom}
\eeq
Since $E\tau=1/(1-\mu)$ the critical exponent $\Delta_2=2$.

\subsection{Cluster size at criticality.}

 Lemma 2.7.1 in \cite{RGD} states the following

\begin{theorem}  Let $\alpha(\lambda) = \lambda-1-\log(\lambda)$. If $k\to\infty$ and $k=o(n^{3/4})$ then the expected number of tree components of size $k$
\begin{align}
\gamma_{n,k}(\lambda) & \equiv  \binom{n}{k} k^{k-2} \left( \frac{\lambda}{n} \right)^{k-1}
\left( 1 - \frac{\lambda}{n} \right)^{k(n-k) +\binom{k}{2} - (k-1) }
\label{gencr} \\
& \sim   n \cdot \frac{ k^{-5/2}}{\lambda\sqrt{2\pi}} \exp\left( -\alpha(\lambda)k +
(\lambda-1)\frac{k^2}{2n} - \frac{k^3}{6n^2}\right) 
\label{numcr}
\end{align}
\end{theorem}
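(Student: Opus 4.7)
The plan is to prove the two displayed lines separately: \eqref{gencr} is an exact combinatorial identity, while \eqref{numcr} follows by applying Stirling's formula and carefully expanding two logarithms in Taylor series, tracking all terms that do not vanish when $k\to\infty$ with $k=o(n^{3/4})$.

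For \eqref{gencr}, I would argue by direct enumeration and linearity of expectation. A labeled tree component of size $k$ in $G(n,\lambda/n)$ is specified by a choice of $k$-vertex set (contributing $\binom{n}{k}$) and a tree structure on those vertices ($k^{k-2}$ by Cayley's formula). Each of the $k-1$ specified tree edges is present with probability $\lambda/n$, and for the cluster to be exactly this tree we need every other pair to be a non-edge: $\binom{k}{2}-(k-1)$ non-tree pairs within the $k$-set and $k(n-k)$ cross-pairs to the complement. Independence of the edge indicators yields the factor $(1-\lambda/n)^{k(n-k)+\binom{k}{2}-(k-1)}$, and summing over vertex sets and tree structures gives the formula.

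For \eqref{numcr}, I would take the logarithm of the right-hand side of \eqref{gencr} and collect contributions from three sources. Stirling gives $k^{k-2}/k! \sim e^k k^{-5/2}/\sqrt{2\pi}$. Writing $\binom{n}{k}=n^k \prod_{j=0}^{k-1}(1-j/n)/k!$ and using $\log(1-j/n)=-j/n-j^2/(2n^2)-j^3/(3n^3)-\cdots$, summation in $j$ yields
\[
\log\prod_{j=0}^{k-1}\bigl(1-\tfrac{j}{n}\bigr)
= -\frac{k^2}{2n} - \frac{k^3}{6n^2} + o(1),
\]
the $O(k^4/n^3)$ remainder being $o(1)$ precisely because $k=o(n^{3/4})$. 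Writing $M=k(n-k)+\binom{k}{2}-(k-1)=kn-\tfrac12 k^2-\tfrac32 k+1$ and expanding $\log(1-\lambda/n)$ the same way gives
\[
M\log(1-\lambda/n)=-\lambda k + \frac{\lambda k^2}{2n} + o(1),
\]
since every other cross-term is at most $O(k/n)$ or $O(\lambda^m k/n^{m-1})$ for $m\ge 2$, all of which are $o(1)$ under our hypothesis.

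Summing the three contributions, the leading factors $e^k\lambda^k\cdot e^{-\lambda k}$ combine to $e^{-\alpha(\lambda)k}$ since $\alpha(\lambda)=\lambda-1-\log\lambda$, the $-k^2/(2n)$ from the binomial expansion and the $+\lambda k^2/(2n)$ from $(1-\lambda/n)^M$ add to $(\lambda-1)k^2/(2n)$, and the $-k^3/(6n^2)$ passes through unchanged, producing exactly the right-hand side of \eqref{numcr}. The main technical obstacle is the bookkeeping: one must identify precisely which subleading pieces in each Taylor series survive (namely $k^2/n$ and $k^3/n^2$) and verify that every other contribution is $o(1)$ in the regime $k=o(n^{3/4})$. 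The threshold $n^{3/4}$ is exactly what is needed to ensure $k^4/n^3\to 0$ in the $\log\binom{n}{k}$ expansion; any faster growth of $k$ would force one to retain a quartic correction and would invalidate the stated asymptotic.
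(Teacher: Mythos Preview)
Your proposal is correct and follows essentially the same argument as the paper: the combinatorial identity \eqref{gencr} via Cayley's formula and edge/non-edge counting, then \eqref{numcr} via Stirling on $k!$, the Taylor expansion of $\log\prod_{j}(1-j/n)$ to order $k^3/n^2$, and the expansion of $(1-\lambda/n)^{M}$ with $M\approx kn-k^2/2$. Your bookkeeping is in fact slightly more explicit than the paper's (you write out $M$ exactly and identify the $k^4/n^3$ term as the reason for the $n^{3/4}$ threshold), but the route is the same.
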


\begin{proof}
Cayley showed in 1889 that there are $k^{k-2}$ trees with $k$ labeled vertices. $\binom{n}{k}$ is the number of ways of choosing the $k$ vertices from the $n$ in the graph. For the tree to be there in the ER graph the $k-1$ edges must be present in the graph, there can be no connection between the $k$ vertices and the other $n-k$ vertices, and the other $\binom{k}{2} - (k-1)$ connections between the $k$ vertices must be missing. This proves \eqref{gencr}.

To simplify to get the version in \eqref{numcr} we start by noting that
\begin{align*}
\binom{n}{k} k^{k-2} \cdot  \left( \frac{\lambda}{n} \right)^{k-1}
& \sim n \cdot \frac{(n-1) \ldots (n-k+1)}{n^{k-1}} \cdot \frac{k^{k-2}}{k^k e^{-k} \sqrt{2\pi k}}  \lambda^{k-1}\\
&\sim \sim n \left[ \prod_{j=1}^{k-1} \left( 1 - \frac{j}{n} \right) \right]
\cdot \frac{ k^{-5/2}}{\lambda e^{-k} \sqrt{2\pi}} \lambda^k
\end{align*}
Using the expansion $\log(1-x) = -x - x^2/2 - x^3/3 - \ldots$ we
see that if $k=o(n)$ then
$$
\left( 1 - \frac{\lambda}{n} \right)^{kn - k^2/2} 
\sim \exp(-\lambda k + \lambda k^2/2n )
$$
while if $k=o(n^{3/4})$ we have 
$$ 
\prod_{j=1}^{k-1} \left( 1 - \frac{j}{n} \right) 
 =  \exp\left( - \frac{1}{n}
\sum_{j=1}^{k-1} j - \frac{1}{2n^2} \sum_{j=1}^{k-1} j^2
+ O\left( \frac{k^4}{n^3}\right) \right) \\
 \sim \exp\left( - \frac{k^2}{2n} - \frac{k^3}{6n^2} \right)
$$
Combining the last three calculations gives the desired formula.
\end{proof}

Taking $\lambda=1$, Lemma 2.7.1 implies that the expected number of components of size $k$ in Erd\"os-R\'enyi($n,1/n$) is
$$
\gamma_{n,\lambda}(\lambda_c) \sim \frac{n k^{-5/2}}{\sqrt{2\pi}} e^{-k^3/6n^2}
$$
This says that the largest components are of size $n^{2/3}$.
The critical exponent $\tau$ defined in \eqref{tau}
$$
P(s,t_c) \sim f(0) s^{1-\tau}
$$
has the value $\tau = 5/2$

To get a result  for $\lambda$ close to 1, use the expansion of $\log$
$$
\alpha(\lambda) = \lambda - 1 - \log \lambda  \sim \frac{(\lambda-1)^2}{2}
$$
to get
$$
P(s,t) \approx  \frac{s^{1-5/2}}{\sqrt{2\pi}} \cdot \exp(- s (\lambda-1)^2/2)
$$
Thus \eqref{nearcr} holds with $\tau=5/2$, $\sigma=1/2$ and 
\beq
f(x) = e^{-x/2}
\label{ERsf}
\eeq

\clearp

\end{document}